\documentclass[11pt,reqno]{amsart}

\numberwithin{equation}{section}
\usepackage[all,cmtip]{xy}
\usepackage{amssymb,rotating,amsmath,amsfonts,amsthm,color,graphics,epsfig,bbm,bm,manfnt,a4wide,psfrag}

\newcommand{\calA}{\mathcal{A}}

\newcommand{\mA}{\mathbb{A}}

\newcommand{\mC}{\mathbb{C}}
\newcommand{\mD}{\mathbb{D}}

\newcommand{\mF}{\mathbb{F}}

\newcommand{\mR}{\mathbb{R}}
\newcommand{\mS}{\mathbb{S}}
\newcommand{\mT}{\mathbb{T}}

\newcommand{\mZ}{\mathbb{Z}}

\newcommand{\bbc}{\mathbf{c}}
\newcommand{\bbd}{\mathbf{d}}

\newcommand{\bbn}{\mathbf{n}}

\newcommand{\bbp}{\mathbf{p}}

\newcommand{\bbx}{\mathbf{x}}
\newcommand{\bby}{\mathbf{y}}

\newcommand{\bbH}{\mathbf{H}}

\newcommand{\inv}{{\textrm{inv }}}
\newcommand{\nm}{\,\rule[-.6ex]{.13em}{2.3ex}\,}

\newtheorem{theorem}{Theorem}[section]
\newtheorem{lemma}[theorem]{Lemma}

\newtheorem{proposition}[theorem]{Proposition}

\theoremstyle{definition}

\theoremstyle{definition}
\newtheorem{definition}[theorem]{Definition}

\theoremstyle{definition}

\begin{document}

\keywords{$\nu$-metric, robust control, Banach algebras}

\subjclass{Primary 93B36; Secondary 93D15, 46J15}

\title[Refinement of the generalized chordal distance]{A 
refinement of the generalized chordal distance}

\author{Amol Sasane}
\address{Department of Mathematics, Faculty of Science, 
    Lund University, Sweden.}
\email{amol.sasane@math.lu.se}

\begin{abstract}
For single input single output systems, we give a refinement $d_{c,r}$ of the 
generalized chordal metric $d_c$ introduced in \cite{Sasb}. Our metric $d_{c,r}$ 
is given in terms of coprime factorizations, but it coincides with the extension of
Vinnicombe's $\nu$-metric given in Ball and Sasane \cite{BalSas} if
the coprime factorizations happen to be  normalized.  The
advantage of the metric $d_{c,r}$ introduced in this article is its easy
computability (since it relies only on coprime factorizations, and does not require 
 {\em normalized} coprime factorizations). We also give concrete 
formulations of our abstract metric for standard classes of stable transfer functions.
\end{abstract}

\maketitle

\section{Introduction}

The Vinnicombe $\nu$-metric introduced in \cite{Vin} and its abstract
version given in \cite{BalSas} both rely on finding {\em normalized}
coprime factorizations. This is a troublesome aspect of the theory,
since
\begin{enumerate}
\item Although merely coprime factorizations may exist, a {\em
    normalized} coprime factorization may fail to exist: for example
  in the article \cite{Tre} by Sergei Treil, it was shown that the set of plants in the field of
  fractions of the disk algebra possessing normalized coprime
  factorizations is strictly contained in the set of plants possessing
  coprime factorizations.
\item Even if they exist, normalized coprime factorizations might be
  impossible to find using a constructive procedure: for example, in
  the paper \cite{PS} by Jonathan Partington and Gregory Sankaran, 
  it is shown that in the case of delay systems, in general
  the relevant spectral factorizations for finding normalized coprime 
  factorizations cannot be determined by solving
  any finite system of polynomial equations over the field
  $\mR(s,e^{−s})$.
\end{enumerate}
The goal in this paper is to show that this problematic feature of the
$\nu$-metric can be eliminated, at least in the case of single input
single output systems, by redefining the $\nu$-metric, which relies
only on coprime factorizations, rather than {\em normalized} coprime
factorizations.  The starting point is the generalized chordal
distance $d_c$ introduced in \cite{Sasb} given in terms of coprime
factorizations, and then considering a refinement $d_{c,r}$ of this
chordal distance akin to the $\nu$-metric $d_\nu$. It turns out that
the metric $d_{c,r}$ coincides with the $\nu$-metric if one has
normalized coprime factorizations at hand. But since our metric is in
fact only defined using coprime factorizations, the burden of working
with normalized coprime factorizations is completely eliminated. Our
main results are then that $d_{c,r}$ defines a metric on the set of
all elements admitting a coprime factorization, and stabilizability is
a robust property of the plant. The precise statements of the results
are given in Theorems~\ref{thm_main_1} and \ref{thm_main_2}, after the
notation has been introduced in the next section. 

\section{Background, known results and related literature} 

Let us recall that in the  ``factorization approach'' to linear control theory, 
one starts with an integral domain $R$, which is thought to constitute 
the set of all transfer functions of {\em stable} 
linear systems. Transfer functions of linear 
systems which are not necessarily stable are then taken to be 
elements of the field of fractions $\mF(R)$ over $R$.  Based on algebraic 
factorizations of the plant transfer function, control theoretic problems can 
then be posed and solved. We refer the uninitiated reader to the monograph by \cite{Vid}. 
This factorization approach to linear control theory has been 
resurrected and extended in the articles \cite{Qua}, \cite{Quab}, \cite{Mik} 
and the references therein. In particular, in \cite{Qua}, \cite{Quab}, 
a theory of solving the stabilization problem (recalled below) is developed, 
which does not rely on the existence of coprime factorizations, 
but instead proceeds under weaker notions of coprimeness. 
This theme is also present in \cite{Mik}, with further emphasis on 
obtaining stabilizing controllers that are ``real symmetric'' 
(and hence physically implementable) given real symmetric data. Our concern 
in this article is not with the stabilization problem, 
but rather the {\em robust} stabilization problem. 
Before recalling the robust stabilization problem, 
let us first remind ourselves of the stabilization problem. 

The {\em stabilization problem} is the following: 
Given $\bbp\in\mF(R)$ (and unstable plant), find 
$\bbc\in \mF(R)$ (a stabilizing controller) such that 
the ``interconnection'' of $\bbp$ and $\bbc$ is stable, that is,  
the closed loop transfer function is stable:
 $$
 \left[\begin{array}{cc} \displaystyle\frac{-\bbp  \bbc}{1-\bbp\bbc} & \displaystyle\frac{\bbp}{1-\bbp \bbc}\\
       \displaystyle\frac{-\bbc}{1-\bbp\bbc} & \displaystyle\frac{1}{1-\bbp\bbc}
      \end{array}\right]\in R^{2\times 2}.
$$
It is well known that this problem can be solved if $\bbp$ has a ``coprime factorization'' \cite{Vid}. 

In reality, the plant transfer function is computed from the differential equation model of the 
situation at hand, which in turn is obtained from a modelling procedure involving idealizations, 
simplifying assumptions, approximations, and measurement of parameters. This means that 
the plant transfer function is not known precisely, but serves only as an approximation of reality. 
Hence engineers imagine that all we know is a ``nominal'' plant transfer function $\bbp_0$, and the reality 
might well be a perturbation $\bbp$ of this nominal transfer function $\bbp_0$. So one wishes the stabilizing controller 
$\bbc$ found for the nominal $\bbp_0$ to stabilize not only $\bbp_0$, but also all plants $\bbp$ close enough to $\bbp_0$, 
and we would also like to be able to compute the radius of this 
neighbourhood (so that the engineers are aware of the parametric uncertainity tolerated). 
\begin{figure}[h]
   \center
   \psfrag{p}[c][c]{$\bbp_0$}   
   \psfrag{c}[c][c]{$\bbc$}   
   \includegraphics[width=6 cm]{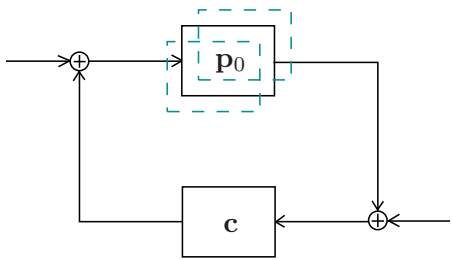}
   \caption{Robust stabilization of an uncertain plant.}
   \label{interconnection}
\end{figure}   
  The question of what should be an appropriate notion of distance one should use to measure 
closeness of unstable plants thus arises naturally. Based on the goals described in the above paragraph, 
it is natural to demand  a  metric $d$ on the set of stabilizable plants  such that  
\begin{enumerate}
 \item $d$ is easily computable, and 
 \item $d$ has the following  property with respect to the Stabilization Problem: Stabilizability becomes 
 a robust property of stabilizable plants in this metric. In other words, if $\bbp_0$ is stabilized by a controller $\bbc$, 
 then there exists an $r>0$ such that every $\bbp$ in the ball $B(\bbp_0,r):=\{\bbp:d(p,p_0)<r\}$ is stabilized by $\bbp$, 
 and this $r$ should be easily computable too. 
\end{enumerate}
Such a metric was introduced by Vinnicombe in \cite{Vin} when $R=RH^\infty$ (rational functions without poles 
in the right half plane or the imaginary axis), and was called the $\nu$-metric in \cite{Vin}. The definition of the $\nu$-metric 
 was given in terms of {\em normalized} coprime factorizations. An abstract extension of this metric was given in \cite{BalSas}  
 in order to cover the case when $R\neq RH^\infty$, but rather $R$ is the class of stable transfer functions 
 of infinite dimensional systems, for example, when $R$ is the Callier-Desoer algebra ($\calA^+$ defined on 
 page \pageref{def_Callier_Desoer} in the present article). The starting point in \cite{BalSas} 
 was abstract, where it was assumed that $R$  satisfies some mild assumptions, and then an abstract $\nu$-metric was defined on those elements of $\mF(R)$ that possess 
 a {\em normalized} coprime factorization. Specific examples of $R$ satisfying the abstract assumptions and the resulting particular examples 
 of the $\nu$-metric were given in \cite{BalSas}. Notably, that the case when $R=\calA^+$ is covered by the abstract set up was given in  
 \cite{BalSas}, but not the case when $R=H^\infty$ (bounded and holomorphic functions in the unit disc with center $0$ in $\mC$). \label{def_hardy} Subsequently, in the auxiliary papers \cite{BalSasH}, \cite{Sas} and \cite{FreSas}, the abstract 
 $\nu$-metric introduced in \cite{BalSas} was {\em applied} in the particular case when $R=H^\infty$.  \cite{FreSas} 
 was essentially a new reformulation of \cite{Sas} (where the case $R=H^\infty$ was covered first), 
 while \cite{BalSasH} covered only the case of $R=QA$ (quasi analytic functions).
 In {\em all} of these, 
 just like in \cite{BalSas}, the $\nu$-metric relied on the existence of {\em normalized} coprime factorizations. 
 As pointed out in the introduction, existence of a coprime factorization of a plant does not 
 automatically imply the existence of a normalized one, and even if a normalized one is known to exist, there may not 
 be a computational procedure to find it. On the other hand, the problem of finding coprime factorizations is much more tractable 
 than finding normalized coprime ones, and 
 so it is desirable to find an extension of Vinnicombe's  metric which is given on the set of all plants that possess a coprime factorization. 
 It is this issue that is addressed in the present paper.  
 Hence the results in the present work are not a consequence of these previous works. See the schematic diagram on page \pageref{heirachy}.
 \begin{figure}[h]
   \center
   \psfrag{A}[c][c]{$\textrm{Present article: } \begin{array}{cc}\textrm{Refinement of the abstract chordal distance}\\
                      \textrm{(distance defined using coprime factorizations)}
                     \end{array}
$}   
   \psfrag{B}[c][c]{$\textrm{\cite{BalSas}: }\begin{array}{cc}\textrm{Abstract } \nu\textrm{-metric for single input single output  systems}\\
                      \textrm{(distance defined using normalized coprime factorizations)}
                     \end{array} $}   
           \psfrag{C}[c][c]{$\begin{array}{llll} \textrm{\cite{BalSas} } R=RH^\infty \textrm{(Vinnicombe }\nu\textrm{-metric)}\\
           \textrm{\cite{BalSas} } R=\calA^+\\
           \textrm{\cite{BalSasH} } R=QA\\
           \textrm{\cite{Sas}}\equiv \textrm{\cite{FreSas} }R=H^\infty
                     \end{array} $}              
   \includegraphics[width=13.2cm]{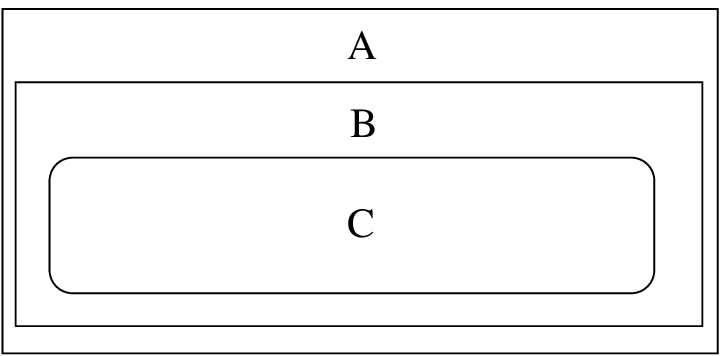}
%   \caption{Related literature.}
\label{heirachy}
\end{figure}

 Finally, we remark that in the previous work \cite{Sasb}, we showed robustness of the property of {\em strong} stabilizability 
 of the ``generalized chordal distance'' in a somewhat different setting. The point of the present work is that by considering 
 a {\em refinement} of the generalized chordal distance from \cite{Sasb}, we can show that the robustness of the property of 
 {\em stabilizability}.  We remark that in general rings $R$ (for example in $R=\calA^+$), the set of stabilizable plants is strictly bigger than 
 the set of strongly stabilizable plants. 
 
 \section{Notation} 
 
 For the convenience of the reader, we have included a table here which
shows the page numbers of the places where the corresponding notation is
first defined.

\medskip

\begin{center}
\begin{tabular}{|c||l|} \hline
  Notation              & Page number \\ \hline \hline
  $R$                   & (A1) on page \pageref{def_R} \\\hline
  $S$, $I$              & (A2) on page \pageref{def_S} \\\hline
  $\cdot^*$             & involution; (A2) on page \pageref{def_I} and page \pageref{involution} \\\hline
  $\iota$, $\circ$, $G$ & (A3) on page \pageref{def_iota} \\\hline
  $\mF(R)$              & field of fractions over $R$; page \pageref{def_F(R)} \\\hline
  $\mS$                 & set of plants having a coprime factorization; page   \pageref{def_mS}\\\hline
  $\mS_n$               & set of plants having a normalized coprime factorization; page   \pageref{def_mSn}\\\hline
  $\widehat{\;\cdot\;}$ & Gelfand/Laplace transform, page \pageref{def_gelfand} and page \pageref{def_Laplace}\\\hline
  $M(S)$                & maximal ideal space of $S$; page \pageref{def_maximal}\\ \hline 
  $\|\cdot\|_S=\|\cdot\|_{S,\infty}$ & norm in the $C^*$ algebra $S$; page \pageref{def_norm}\\ \hline
  $\nm\;\cdot \;\nm$    & induced operator norm from $\mC^p$ to $\mC^m$; page   \pageref{def_op_norm}\\ \hline
  $\kappa_{\bbp_1, \bbp_2}$ & chordal distance function between $\bbp_1, \bbp_2$; page \pageref{def_chordal_dis}\\ \hline
  $d_{c,r}$             & refinement of  the  chordal distance; page \pageref{eq_nu_metric} \\ \hline 
  $\mu_{\bbp,\bbc}$     & stability margin of $(\bbp, \bbc)$; page \pageref{def_stab_margin}\\ \hline 
  $\mD,\mT, \overline{\mD},  \mC_{\scriptscriptstyle >0}$ $\phantom{\displaystyle \int}$ & certain subsets of $\mC$; page \pageref{def_subsets_of_C} \\ \hline 
  $H^\infty$            & the Hardy algebra; page \pageref{def_hardy}\\ \hline 
  $RH^\infty$           & set of rational elements of $H^\infty$; page \pageref{def_classical_algebras} \\ \hline 
  $W^+(\mD)$            & the Wiener algebra; page \pageref{def_classical_algebras} \\ \hline 
  $A(\mD)$              & the disc algebra; page \pageref{def_classical_algebras} \\ \hline 
  $\widehat{L^1[0,\infty)}+\mC$ $\phantom{\displaystyle \int}$& the Wiener-Laplace algebra; page \pageref{def_classical_algebras} \\ \hline 
  $C(\mT)$              & set of continuous functions on $\mT$; page \pageref{def_C(T)} \\ \hline 
  $w$                   & page \pageref{def_winding_number} and page \pageref{def_w} \\ \hline 
  $\calA^+$             & page \pageref{def_Callier_Desoer}\\ \hline 
  $C_0$                 & set of continuous functions on $\mR$ vanishing at $\pm\infty$; page \pageref{def_C_0}\\ \hline 
  $AP$                  & set of almost periodic functions; page \pageref{def_C_0}\\ \hline 
  $w_{av}$              & average winding number; page \pageref{def_average_winding}\\ \hline 
  $\mA_r, \mA_0$        & annulus; page \pageref{def_annulus}\\ \hline          
  $\pi_r^R$             & restriction; page \pageref{def_pi_r^R}\\ \hline 
  $\pi_r$               & restriction; page \pageref{def_pi_r}\\ \hline 
  $C_b(\mA_r)$, $\displaystyle \prod_{r\in (0,1)}C_b(\mA_r)$         & page \pageref{def_C_b} \\ \hline 
  $\calA$, $N$               & page \pageref{def_calA}\\ \hline
  $\beta \mA_0$, $C(\beta \mA_0\setminus \mA_0)$       & page \pageref{def_stone} \\ \hline
\end{tabular}

\end{center}

 \goodbreak 
 
\section{Setup and preliminaries}

\noindent Our setup is the following:
\begin{itemize}
\item[(A1)] $R$ is commutative integral domain with identity.\label{def_R}
\item[(A2)] $S$ is a commutative $C^\ast$-algebra such that $R \subset
  S$, that is there is an injective ring homomorphism $I:R\rightarrow
  S$.\label{def_S}\label{def_I}
\item[(A3)] Let $\inv S$ to denote the invertible elements of $S$.
  There exists a map $\iota: \inv S \rightarrow G$, where $(G,+)$ is
  an Abelian group with identity denoted by $\circ$, and $\iota$
  satisfies\label{def_iota} \label{def_circ} \label{def_G}
\begin{itemize}
\item[(I1)] $\iota(ab)= \iota (a) +\iota(b)$ ($a,b \in \inv S$).
\item[(I2)] $\iota(a^*)=-\iota(a)$ ($a\in \inv S$).
\item[(I3)] $\iota$ is locally constant, that is, $\iota$ is
  continuous when $G$ has the discrete topology.
\item[(I4)] $\bbx\in R \cap (\inv S)$ is invertible as an element of
  $R$ if and only if $\iota(\bbx)=\circ$.
\item[(I5)] If $\bbx\in S$ and $\bbx>0$, then $\iota(\bbx)=\circ$.
\end{itemize}
\end{itemize}

\medskip 

\noindent (A3) allows identification of elements of $R$ with elements
of $S$. So in the sequel, if $\bbx$ is an element of $R$, we will
simply write $\bbx$ (an element of $S$!) instead of $I(\bbx)$.  A
consequence of (I3) is the following homotopic invariance of the index
\cite[Proposition~2.1]{BalSas}, which we will use in the sequel. 
(I4) is to be thought of as an abstract Nyquist criterion. We
also remark that there are salient differences in the setup in
\cite{BalSas} versus the above. In particular, we take $S$ to be a
$C^*$-algebra, whereas in \cite{BalSas}, $S$ was a Banach algebra, and
also the condition (I5), which is critically used in our proofs, was
missing in the framework of \cite{BalSas}.

\begin{proposition}
\label{prop_hom_inv}
If $H:[0,1] \rightarrow \textrm{\em inv } S$ is continuous, then $
\iota (H(0))=\iota(H(1))$.
\end{proposition}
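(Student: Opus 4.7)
The plan is to exploit property (I3), which says that $\iota$ is continuous when $G$ carries the discrete topology, combined with the connectedness of the interval $[0,1]$.

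First, I would form the composition $\iota \circ H : [0,1] \to G$. Since $H$ is continuous into $\inv S$ by hypothesis, and $\iota : \inv S \to G$ is continuous when $G$ is endowed with the discrete topology by (I3), the composition $\iota \circ H$ is a continuous map from $[0,1]$ into the discrete space $G$.

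Next I would invoke the topological fact that a continuous image of a connected space is connected, together with the observation that in a discrete space the only connected subsets are singletons. Since $[0,1]$ is connected, the image $(\iota \circ H)([0,1])$ must be a single point of $G$. Hence $\iota(H(t))$ is independent of $t \in [0,1]$, and in particular $\iota(H(0)) = \iota(H(1))$.

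I do not anticipate any real obstacle here: the entire content of the proposition is precisely a rewording of local constancy on a connected domain. The only subtlety worth noting explicitly is verifying that continuity of $\iota$ into the discrete topology is the right formulation of ``locally constant'' — one can alternatively phrase the argument by covering $[0,1]$ by open sets on which $\iota \circ H$ is constant and invoking a standard connectedness (or compactness plus chain) argument, but the discrete-topology formulation makes the proof essentially a one-liner.
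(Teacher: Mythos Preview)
Your proposal is correct and matches the paper's approach: the paper's proof simply says ``This follows from (I3)'' and cites \cite[Proposition~2.1]{BalSas}, which is exactly the connectedness-plus-local-constancy argument you spell out. Your version is more detailed than the paper's one-line reference, but the content is identical.
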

\begin{proof} This follows from  (I3); see \cite[Proposition~2.1]{BalSas}.
 \end{proof}

\noindent We also introduce the following standard notation and
terminology:

\medskip 

\noindent $\mF(R)$ denotes the field of fractions of $R$. \label{def_F(R)} Given $\bbp \in \mF(R)$, a factorization
$$
\bbp=\frac{\bbn}{\bbd},
$$
where $\bbn \in R$, $\bbd\in R\setminus\{ 0\}$, is called a {\em coprime factorization of}
$\bbp$ if there exist $\bbx, \bby\in R$ such that
$$
\bbn \bbx+\bbd \bby=1.
$$
We denote by $\mS$ the set of all elements in $\mF(R)$ that possess a
coprime factorization.\label{def_mS}

The maximal ideal space of $S$ is denoted by $M(S)$. \label{def_maximal}  If $\bbx\in S$,
then we denote by $\widehat{\bbx}$ \label{def_gelfand}the Gelfand transform of
$\bbx$. Also, by the Gelfand-Naimark Theorem \cite[Theorem~11.18]{Rud},\label{def_norm}
$$
\|\bbx\|_S
=
\|\widehat{\bbx}\|_{S,\infty}
=
\max_{\varphi \in M(S)}|\widehat{\bbx}(\varphi)|.
$$
Let  $\bbp_1,\bbp_2 \in \mS(R)$ have coprime factorizations
$$
\bbp_1=\displaystyle \frac{\bbn_1}{\bbd_1}
\textrm{ and }
\bbp_2=\displaystyle \frac{\bbn_2}{\bbd_2}.
$$
Then the {\em chordal pointwise distance function} \label{def_chordal_dis}
$\kappa_{\bbp_1,\bbp_2}$ is 
$$
\kappa_{\bbp_1,\bbp_2}(\varphi)
:=
\frac{|\widehat{\bbn_1}(\varphi)\widehat{\bbd_2}(\varphi)
-\widehat{\bbn_2}(\varphi)\widehat{\bbd_1}(\varphi)|}{
\sqrt{|\widehat{\bbn_1}(\varphi)|^2+|\widehat{\bbd_1}(\varphi)|^2} 
\sqrt{|\widehat{\bbn_2}(\varphi)|^2+|\widehat{\bbd_2}(\varphi)|^2}},
\quad \varphi \in M(S).
$$
The function $\kappa$ given by the above expression is well-defined,
that is, it does not depend on the choice of coprime factorizations
for each of the plants; see \cite{Sasb}.

\begin{definition}
For $\bbp_1, \bbp_2 \in \mS$, with coprime factorizations
$$
\bbp_1= \frac{\bbn_{1}}{\bbd_{1}},\quad \bbp_2= \frac{\bbn_{2}}{\bbd_{2}},
$$
we define
\begin{equation}
\label{eq_nu_metric}
d_{c,r} (\bbp_1,\bbp_2 ):=\left\{
\begin{array}{ll}
  \|\kappa_{\bbp_1,\bbp_2}\|_{S,\infty}  &
  \textrm{if } \bbn_1^* \bbn_2+\bbd_1^* \bbd_2 \in \inv S \textrm{ and }
  \iota ( \bbn_1^* \bbn_2+\bbd_1^* \bbd_2)=\circ, \\
  1 & \textrm{otherwise}. \end{array}
\right.
\end{equation}
\end{definition}

\medskip 

\noindent $d_{c,r}$ given by \eqref{eq_nu_metric} is well-defined, and
it is bounded above by $1$. It is also clear that if the two plants
have {\em normalized} coprime factorizations, that is, if also
$$
\bbn_{1}^*\bbn_1+\bbd_{1}^*\bbd_1=1=\bbn_{2}^*\bbn_2+\bbd_{2}^*\bbd_2,
$$
then it is clear that $d_{c,r}(\bbp_1,\bbp_2)=d_\nu(\bbp_1,\bbp_2)$, where 
$d_\nu$ is the extension of the $\nu$-metric defined in \cite{BalSas}. We note however, 
that $d_\nu$ was defined on the set $\mS_n$ \label{def_mSn}
of elements in the field of fractions possessing a normalized coprime factorization, and for some 
rings $R$ (for example $R=A(\mD)$, the disk algebra; see \cite{Tre}), it may be the case that $\mS_n\subsetneq \mS$. 

Our first main result is the following:

\begin{theorem}
\label{thm_main_1}
$d_{c,r}$ is a metric on $\mS$.
\end{theorem}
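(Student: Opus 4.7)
The plan is to verify the four metric axioms in turn, with essentially all the content concentrated in the triangle inequality. Non-negativity and the bound $d_{c,r}\le 1$ are built into \eqref{eq_nu_metric}. For $d_{c,r}(\bbp,\bbp)=0$, pick any coprime factorization $\bbp=\bbn/\bbd$; the Bezout identity $\bbn\bbx+\bbd\bby=1$ rules out simultaneous vanishing of $\widehat{\bbn}$ and $\widehat{\bbd}$ at any $\varphi\in M(S)$, so $\bbn^{*}\bbn+\bbd^{*}\bbd=|\widehat{\bbn}|^{2}+|\widehat{\bbd}|^{2}>0$ in the $C^{*}$-algebra $S$, hence lies in $\inv S$ with $\iota=\circ$ by (I5), and $\kappa_{\bbp,\bbp}\equiv 0$. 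Conversely, $d_{c,r}(\bbp_1,\bbp_2)=0$ forces $\widehat{\bbn_1}\widehat{\bbd_2}=\widehat{\bbn_2}\widehat{\bbd_1}$ on $M(S)$; injectivity of the Gelfand transform gives $\bbn_1\bbd_2=\bbn_2\bbd_1$ in $S$, hence $\bbp_1=\bbp_2$ in $\mF(R)$. Symmetry is immediate: $\bbn_2^{*}\bbn_1+\bbd_2^{*}\bbd_1=(\bbn_1^{*}\bbn_2+\bbd_1^{*}\bbd_2)^{*}$, so invertibility transfers and (I2) gives $\iota=-\circ=\circ$; and $\kappa_{\bbp_1,\bbp_2}=\kappa_{\bbp_2,\bbp_1}$ by inspection.

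For the triangle inequality I set $a_{ij}=\bbn_i^{*}\bbn_j+\bbd_i^{*}\bbd_j$ and $b_{ij}=\bbn_i\bbd_j-\bbn_j\bbd_i$. Direct expansion in the commutative algebra $S$ yields the key identity
$$
a_{22}\,a_{13}\;=\;a_{12}\,a_{23}\;-\;b_{12}^{*}\,b_{23}.
$$
The pointwise Lagrange relation $|\widehat{a_{ij}}|^{2}+|\widehat{b_{ij}}|^{2}=\widehat{a_{ii}}\,\widehat{a_{jj}}$ on $M(S)$ identifies $\kappa_{\bbp_i,\bbp_j}(\varphi)$ with the classical chordal distance on $\mP^{1}(\mC)$ between $[\widehat{\bbn_i}(\varphi):\widehat{\bbd_i}(\varphi)]$ and $[\widehat{\bbn_j}(\varphi):\widehat{\bbd_j}(\varphi)]$, from which the pointwise triangle inequality $\kappa_{\bbp_1,\bbp_3}(\varphi)\le\kappa_{\bbp_1,\bbp_2}(\varphi)+\kappa_{\bbp_2,\bbp_3}(\varphi)$ is standard. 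I would then split on $\delta:=d_{c,r}(\bbp_1,\bbp_2)+d_{c,r}(\bbp_2,\bbp_3)$. If $\delta\ge 1$, then $d_{c,r}(\bbp_1,\bbp_3)\le 1\le\delta$ trivially. If $\delta<1$, each summand is strictly below $1$, placing each pair in the first branch of \eqref{eq_nu_metric}: $a_{12},a_{23}\in\inv S$ with $\iota=\circ$, and $\|\kappa_{\bbp_1,\bbp_2}\|_{S,\infty}+\|\kappa_{\bbp_2,\bbp_3}\|_{S,\infty}<1$. Writing $\kappa_{\bbp_i,\bbp_j}=\sin\theta_{ij}$ with $\theta_{ij}\in[0,\pi/2]$, the bound $\sin\alpha+\sin\beta<1$ on $[0,\pi/2]^{2}$ forces $\alpha+\beta<\pi/2$ (since $\sin\alpha+\cos\alpha\ge 1$), so $\cos(\theta_{12}+\theta_{23})>0$ pointwise on $M(S)$, and the key identity lower-bounds $|\widehat{a_{13}}|$ away from zero, putting $a_{13}\in\inv S$.

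To propagate the index I would use the straight-line homotopy
$$
H(t)\;=\;a_{22}^{-1}\bigl(a_{12}\,a_{23}-(1-t)\,b_{12}^{*}\,b_{23}\bigr),\qquad t\in[0,1],
$$
which is continuous into $S$; the same pointwise lower bound (with $(1-t)\kappa_{\bbp_1,\bbp_2}\kappa_{\bbp_2,\bbp_3}\le\kappa_{\bbp_1,\bbp_2}\kappa_{\bbp_2,\bbp_3}$) keeps $a_{22}H(t)$ invertible uniformly in $t$, so $H(t)\in\inv S$ for every $t$. Since $H(0)=a_{13}$ and $H(1)=a_{22}^{-1}a_{12}a_{23}$, Proposition~\ref{prop_hom_inv} together with (I1) yields $\iota(a_{13})=\iota(a_{22}^{-1})+\iota(a_{12})+\iota(a_{23})$. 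Because $a_{22}=|\widehat{\bbn_2}|^{2}+|\widehat{\bbd_2}|^{2}>0$ in $S$, axiom (I5) forces $\iota(a_{22})=\circ$ and hence $\iota(a_{22}^{-1})=\circ$; combined with $\iota(a_{12})=\iota(a_{23})=\circ$, this gives $\iota(a_{13})=\circ$. The pointwise triangle inequality then yields $d_{c,r}(\bbp_1,\bbp_3)=\|\kappa_{\bbp_1,\bbp_3}\|_{S,\infty}\le\|\kappa_{\bbp_1,\bbp_2}\|_{S,\infty}+\|\kappa_{\bbp_2,\bbp_3}\|_{S,\infty}=\delta$.

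The main obstacle is exactly this last index bookkeeping: invertibility of $a_{13}$ is quantitative and drops out of the chordal estimate, but $\iota(a_{13})=\circ$ is qualitative and must be propagated along a path in $\inv S$ from a manifestly index-zero element. This is where the strengthened $C^{*}$-algebra framework and the new axiom (I5) are essential---(I5) is precisely what kills the index of the positive element $a_{22}$, a cancellation that was not available in the Banach-algebra setup of \cite{BalSas}.
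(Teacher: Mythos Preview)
Your proof is correct and rests on exactly the same engine as the paper's: the factorization identity $a_{22}\,a_{13}=a_{12}\,a_{23}-b_{12}^{*}\,b_{23}$, the Lagrange relation $|\widehat{a_{ij}}|^{2}+|\widehat{b_{ij}}|^{2}=\widehat{a_{ii}}\,\widehat{a_{jj}}$, homotopic invariance of $\iota$, and axiom~(I5) to kill the index of the positive element $a_{22}$. The only real difference is organizational. The paper splits the triangle inequality as: (i) some right-hand term equals $1$ (trivial); (ii) both $<1$ and the $(1,3)$-pair is already in the good branch (pointwise chordal triangle); (iii) both $<1$ but the $(1,3)$-pair is bad, in which case a contradiction argument forces $\|A^{-1}B\|\ge 1$, compactness of $M(S)$ yields a point $\varphi_0$ with $|\widehat B(\varphi_0)|\ge|\widehat A(\varphi_0)|$, and the Lagrange identity gives $\kappa_{\bbp_0,\bbp_1}^{2}+\kappa_{\bbp_0,\bbp_2}^{2}\ge 1$ there, hence the right side is $\ge 1$. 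You instead split on $\delta\ge 1$ versus $\delta<1$ and run the \emph{direct} form of the same argument: $\delta<1$ gives $\sin\theta_{12}+\sin\theta_{23}<1$ pointwise, hence $\theta_{12}+\theta_{23}<\pi/2$ uniformly, hence $|\widehat A|>|\widehat B|$ everywhere, so the straight-line homotopy stays in $\inv S$ and propagates $\iota=\circ$ to $a_{13}$. Your route is the contrapositive of the paper's case~(iii) and is marginally cleaner in that it avoids invoking compactness of $M(S)$ to extract a witness point; the paper's route, on the other hand, does not need the elementary but extra implication $\sin\alpha+\sin\beta<1\Rightarrow\alpha+\beta<\pi/2$.
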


We will also show that stabilizability is a robust property of the
plant and give the quantitative version of this in
Theorem~\ref{thm_main_2} below.  But first, we have the following
definition of the stability margin.

\begin{definition}
Given $\bbp\in \mF(R)$, we say that $\bbc\in \mF(R)$ {\em
    stabilizes} $\bbp$ if
$$
\bbH(\bbp,\bbc)
:=\left[\begin{array}{cccc} 
\displaystyle \frac{-\bbp \bbc}{1-\bbp \bbc} &  
\displaystyle \frac{\bbp}{1-\bbp \bbc} \\
\displaystyle \frac{- \bbc\phantom{^{\displaystyle f}} }{1-\bbp \bbc} & 
\displaystyle \frac{1}{1-\bbp \bbc} 
\end{array}\right]\in R^{2\times 2}.
$$
We define the {\em stability margin} $\mu_{\bbp,\bbc}$ of
$\bbp,\bbc\in \mF(R)$ by\label{def_stab_margin}
$$
\mu_{\bbp,\bbc}
=
\left\{\begin{array}{cl} 
\displaystyle \frac{1}{\|\bbH (\bbp,\bbc)\|_{S,\infty}} & 
\textrm{if } \bbc\textrm{ stabilizes } \bbp,\\
0\phantom{\displaystyle\frac{a}{b}} & \textrm{otherwise}.
\end{array}\right.
$$
Here for a matrix $M\in S^{p\times m}$ with entries from $S$, we set
$$
\|M\|_{S,\infty}=\sup_{\varphi \in M(S)} \nm \widehat{M}(\varphi)\nm,
$$
where $\widehat{M}$ denotes the matrix obtained by taking entrywise
Gelfand transforms, and $\nm\cdot \nm$  \label{def_op_norm} denotes the induced operator
norm when $\mC^p, \mC^m$ are equipped with the usual Euclidean norms.
\end{definition}

\begin{theorem}
\label{thm_main_2}
If $\bbp,\bbp_0,\bbc\in \mS$, then $\mu_{\bbp,\bbc}\geq
\mu_{\bbp_0,\bbc}-d_{c,r}(\bbp,\bbp_0)$.
\end{theorem}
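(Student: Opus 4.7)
The plan is to fix coprime factorizations $\bbp_0 = \bbn_0/\bbd_0$, $\bbp = \bbn/\bbd$, $\bbc = \bby/\bbx$. If $d_{c,r}(\bbp, \bbp_0) \geq \mu_{\bbp_0,\bbc}$, the claim will be immediate from $\mu_{\bbp,\bbc} \geq 0$, so I will assume $d_{c,r}(\bbp, \bbp_0) < \mu_{\bbp_0,\bbc}$. Lagrange's identity for $2\times 2$ complex determinants supplies the unconditional pointwise bound $|\widehat{\bbd}_0 \widehat{\bbx} - \widehat{\bbn}_0 \widehat{\bby}| \leq \sqrt{|\widehat{\bbn}_0|^2 + |\widehat{\bbd}_0|^2}\sqrt{|\widehat{\bbx}|^2 + |\widehat{\bby}|^2}$, hence $\mu_{\bbp_0,\bbc} \leq 1$, forcing $d_{c,r}(\bbp, \bbp_0) < 1$. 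By the definition \eqref{eq_nu_metric}, this in turn forces $h := \bbn^*\bbn_0 + \bbd^*\bbd_0 \in \inv S$ with $\iota(h) = \circ$, and gives $d_{c,r}(\bbp, \bbp_0) = \sup_{\varphi \in M(S)} \kappa_{\bbp, \bbp_0}(\varphi)$. Since $\mu_{\bbp_0,\bbc} > 0$, $\bbc$ stabilizes $\bbp_0$, so $\bbd_0 \bbx - \bbn_0 \bby \in \inv R$ and $\iota(\bbd_0 \bbx - \bbn_0 \bby) = \circ$ by (I4).

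Next I will set up a pointwise chordal triangle inequality on $\mathbb{CP}^1$. Since $\bbH(\bbp,\bbc)$ is rank-one, $1/\nm \widehat{\bbH(\bbp,\bbc)}(\varphi)\nm$ equals the quantity $\mu_{\bbp,\bbc}(\varphi) := |\widehat{\bbd} \widehat{\bbx} - \widehat{\bbn} \widehat{\bby}| / (\sqrt{|\widehat{\bbn}|^2 + |\widehat{\bbd}|^2}\sqrt{|\widehat{\bbx}|^2 + |\widehat{\bby}|^2})$, and similarly for $\mu_{\bbp_0, \bbc}(\varphi)$; these and $\kappa_{\bbp, \bbp_0}(\varphi)$ are values of the chordal metric on $\mathbb{CP}^1$ (the sine of the Fubini--Study distance) between the projective points $[\widehat{\bbd} : \widehat{\bbn}]$, $[\widehat{\bbd}_0 : \widehat{\bbn}_0]$, and $[\widehat{\bby} : \widehat{\bbx}]$. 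Since sine is subadditive on $[0, \pi/2]$ (and trivially bounded by $1$ otherwise), this is a genuine metric, yielding the pointwise triangle inequality $\mu_{\bbp, \bbc}(\varphi) \geq \mu_{\bbp_0, \bbc}(\varphi) - \kappa_{\bbp, \bbp_0}(\varphi) \geq \mu_{\bbp_0, \bbc} - d_{c,r}(\bbp, \bbp_0) > 0$ for every $\varphi \in M(S)$. In particular $\bbd \bbx - \bbn \bby \in \inv S$.

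The hard part will be upgrading this to $\bbd \bbx - \bbn \bby \in \inv R$; by (I4) it suffices to show $\iota(\bbd \bbx - \bbn \bby) = \circ$. I plan to exploit the algebraic identity
\[
(\bbd \bbx - \bbn \bby)\, h \;=\; A + B, \qquad A := (\bbd_0 \bbx - \bbn_0 \bby)(\bbn^* \bbn + \bbd^* \bbd), \quad B := (\bbn_0 \bbd - \bbn \bbd_0)(\bbn^* \bbx + \bbd^* \bby),
\]
which is a routine expansion using commutativity in $S$. Coprimeness of $(\bbn, \bbd)$ gives $\bbn^* \bbn + \bbd^* \bbd > 0$ in the $C^*$-algebra $S$, so by (I5) $\iota(\bbn^* \bbn + \bbd^* \bbd) = \circ$ and $\iota(A) = \iota(\bbd_0 \bbx - \bbn_0 \bby) = \circ$. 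Applying Cauchy--Schwarz to $|\overline{\widehat{\bbn}} \widehat{\bbx} + \overline{\widehat{\bbd}} \widehat{\bby}|$ and comparing with the formulas for $\mu_{\bbp_0, \bbc}(\varphi)$ and $\kappa_{\bbp, \bbp_0}(\varphi)$ will yield the uniform pointwise bound $|\widehat{B}(\varphi)| / |\widehat{A}(\varphi)| \leq \kappa_{\bbp, \bbp_0}(\varphi) / \mu_{\bbp_0, \bbc}(\varphi) \leq d_{c,r}(\bbp, \bbp_0) / \mu_{\bbp_0, \bbc} < 1$. Consequently the straight-line homotopy $s \mapsto A + sB$ for $s \in [0, 1]$ stays within $\inv S$, so Proposition~\ref{prop_hom_inv} gives $\iota(A + B) = \iota(A) = \circ$. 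Combined with $\iota(h) = \circ$ this forces $\iota(\bbd \bbx - \bbn \bby) = \circ$; hence $\bbc$ stabilizes $\bbp$, and taking the infimum over $\varphi$ in the pointwise chordal triangle inequality delivers the desired bound $\mu_{\bbp, \bbc} \geq \mu_{\bbp_0, \bbc} - d_{c,r}(\bbp, \bbp_0)$. The main obstacle is this index-transport step, and it is precisely here that the newly added axiom (I5) plays an essential role that is not needed in \cite{BalSas}.
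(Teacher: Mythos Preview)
Your proof is correct and follows the paper's overall strategy: reduce to the case $d_{c,r}(\bbp,\bbp_0)<\mu_{\bbp_0,\bbc}$, establish a pointwise inequality that forces $\bbd\bbx-\bbn\bby\in\inv S$, and then transport the index via a homotopy $A+sB$ built from an algebraic identity, invoking (I5) on a positive element. Two points of genuine difference are worth noting. First, your algebraic identity is not the paper's: the paper writes $(\bbn\bby-\bbd\bbx)(|\bbn_0|^2+|\bbd_0|^2)=A'+B'$ with $A'=(\bbn_0^*\bbn+\bbd_0^*\bbd)(\bbn_0\bby-\bbd_0\bbx)$, applying (I5) to $|\bbn_0|^2+|\bbd_0|^2$; you instead write $(\bbd\bbx-\bbn\bby)\,h=A+B$ with $h=\bbn^*\bbn_0+\bbd^*\bbd_0$ and $A=(\bbd_0\bbx-\bbn_0\bby)(|\bbn|^2+|\bbd|^2)$, applying (I5) to $|\bbn|^2+|\bbd|^2$ and then cancelling $\iota(h)=\circ$. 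Both decompositions work; yours has the mild advantage that the bound $|\widehat B|/|\widehat A|\le \kappa_{\bbp,\bbp_0}(\varphi)/\mu_{\bbp_0,\bbc}(\varphi)$ follows from a single Cauchy--Schwarz estimate rather than from multiplying two pointwise inequalities as in the paper. Second, and more substantively, your derivation of the final numerical inequality is cleaner: recognising $\kappa_{\bbp,\bbp_0}(\varphi)$, $\mu_{\bbp_0,\bbc}(\varphi)$, $\mu_{\bbp,\bbc}(\varphi)$ as chordal distances on $\mathbb{CP}^1$ and invoking the triangle inequality directly replaces the paper's longer $\sin^{-1}$ computation (which in effect reproves that triangle inequality via the Fubini--Study angle). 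The trade-off is that the paper's route is entirely self-contained within Lemma~\ref{lemma_cn_fact} and elementary trigonometry, whereas yours imports the (standard) fact that the chordal metric on $\mathbb{CP}^1$ satisfies the triangle inequality.
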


In order to show Theorem~\ref{thm_main_2}, the following fact will be
useful.

\begin{lemma}
If $\bbp,\bbc\in \mS$ have coprime factorizations $\bbp=
\displaystyle \frac{\bbn_p}{\bbd_p}$ and $\bbc=\displaystyle  
\frac{\bbn_c}{\bbd_c}$, then 
$$ 
\mu_{\bbp,\bbc}=\displaystyle
  \inf_{\varphi \in M(S)}
  \frac{|\widehat{\bbn}_p(\varphi)\widehat{\bbn}_c(\varphi)-
    \widehat{\bbd}_p(\varphi)\widehat{\bbd}_c(\varphi)|}{
    \sqrt{|\widehat{\bbn}_p(\varphi)|^2+|\widehat{\bbd}_p(\varphi)|^2
    }\sqrt{
      |\widehat{\bbn}_c(\varphi)|^2+|\widehat{\bbd}_c(\varphi)|^2}}.
$$
\end{lemma}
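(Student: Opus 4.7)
The plan is to make $\bbH(\bbp,\bbc)$ explicit in terms of the coprime factors and to leverage the Gelfand transform pointwise on $M(S)$. Substituting $\bbp=\bbn_p/\bbd_p$ and $\bbc=\bbn_c/\bbd_c$ into the definition of $\bbH(\bbp,\bbc)$ and clearing denominators yields the rank-one factorization
$$
\bbH(\bbp,\bbc) \;=\; \frac{1}{\bbd_p\bbd_c - \bbn_p\bbn_c}\begin{bmatrix}-\bbn_p\\ -\bbd_p\end{bmatrix}\begin{bmatrix}\bbn_c & -\bbd_c\end{bmatrix}.
$$
The outer-product form is the key structural observation, because it reduces the pointwise operator 2-norm to an elementary computation.

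Next I would record the standard equivalence that, given coprime factorizations, $\bbc$ stabilizes $\bbp$ (i.e.\ $\bbH(\bbp,\bbc)\in R^{2\times 2}$) if and only if $\bbn_p\bbn_c-\bbd_p\bbd_c$ is a unit of $R$. The nontrivial direction uses the B\'ezout identities $\bbn_p\bbx_p+\bbd_p\bby_p=1$ and $\bbn_c\bbx_c+\bbd_c\bby_c=1$ (with $\bbx_\ast,\bby_\ast\in R$) to express $1/(\bbd_p\bbd_c-\bbn_p\bbn_c)$ as an $R$-linear combination of the four entries of $\bbH(\bbp,\bbc)$. By (I4), this unit condition is equivalent to $\bbn_p\bbn_c-\bbd_p\bbd_c\in \inv S$ together with $\iota(\bbn_p\bbn_c-\bbd_p\bbd_c)=\circ$.

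Assuming $\bbc$ stabilizes $\bbp$, I would take the Gelfand transform entrywise at each $\varphi\in M(S)$. Using the elementary identity $\nm uv^\top\nm=\|u\|_2\|v\|_2$ for an outer product of vectors $u,v\in \mC^2$, one obtains
$$
\nm\widehat{\bbH(\bbp,\bbc)}(\varphi)\nm \;=\; \frac{\sqrt{|\widehat{\bbn}_p(\varphi)|^2+|\widehat{\bbd}_p(\varphi)|^2}\,\sqrt{|\widehat{\bbn}_c(\varphi)|^2+|\widehat{\bbd}_c(\varphi)|^2}}{|\widehat{\bbn}_p(\varphi)\widehat{\bbn}_c(\varphi)-\widehat{\bbd}_p(\varphi)\widehat{\bbd}_c(\varphi)|}.
$$
Coprimeness ensures that neither square root vanishes anywhere on $M(S)$. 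Taking the supremum over $\varphi\in M(S)$ gives $\|\bbH(\bbp,\bbc)\|_{S,\infty}$, and reciprocating yields precisely the infimum on the right-hand side of the lemma.

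It remains to handle the case where $\bbc$ does not stabilize $\bbp$, in which $\mu_{\bbp,\bbc}=0$ by definition and one must verify that the infimum also vanishes. When $\bbn_p\bbn_c-\bbd_p\bbd_c\notin \inv S$, its Gelfand transform has a zero on $M(S)$ and the infimum is plainly $0$. The delicate remaining sub-case is when $\bbn_p\bbn_c-\bbd_p\bbd_c\in \inv S$ but $\iota(\bbn_p\bbn_c-\bbd_p\bbd_c)\neq \circ$; this is where I expect the main obstacle, and resolving it cleanly would require exploiting the $C^*$-structure together with (I5) applied to the positive element $(\bbn_p\bbn_c-\bbd_p\bbd_c)^*(\bbn_p\bbn_c-\bbd_p\bbd_c)$, or else reading the lemma as implicitly restricted to the stabilizing case in which $\mu_{\bbp,\bbc}>0$.
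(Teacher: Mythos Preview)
Your core argument---the rank-one factorization of $\bbH(\bbp,\bbc)$ followed by the outer-product norm identity $\nm uv^\top\nm=\|u\|_2\|v\|_2$---is exactly the paper's proof, which consists of precisely those two observations and nothing more.

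Where you go beyond the paper is in your case analysis of the non-stabilizing situation, and you are right to flag the sub-case $\bbn_p\bbn_c-\bbd_p\bbd_c\in\inv S$ with $\iota(\bbn_p\bbn_c-\bbd_p\bbd_c)\neq\circ$ as problematic. In that case the infimum on the right-hand side is strictly positive (the numerator is bounded below on $M(S)$ and the denominators are bounded above), whereas $\mu_{\bbp,\bbc}=0$ by definition. A concrete instance in $R=A(\mD)$, $S=C(\mT)$: take $\bbp=0$ (so $\bbn_p=0$, $\bbd_p=1$) and $\bbc=1/z$ (so $\bbn_c=1$, $\bbd_c=z$); then $\bbn_p\bbn_c-\bbd_p\bbd_c=-z$ has winding number $1$, the right-hand infimum equals $1/\sqrt{2}$, yet $\bbc$ does not stabilize $\bbp$. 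Your proposed route via (I5) cannot rescue this, since $\iota(x^*x)=-\iota(x)+\iota(x)=\circ$ holds automatically for every $x\in\inv S$ by (I1) and (I2) and therefore yields no constraint on $\iota(x)$.

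The paper's own proof does not address this case at all. The lemma is only invoked later in the stabilizing regime (for $\mu_{\bbp_0,\bbc}$ once $\bbc$ is known to stabilize $\bbp_0$, and for $\mu_{\bbp,\bbc}$ after stabilization has been established), so your second reading---that the formula is implicitly meant under the hypothesis that $\bbc$ stabilizes $\bbp$---is the correct one.
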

\begin{proof} The claim follows from the observations that 
$$
\bbH(\bbp,\bbc)=
\frac{1}{\bbd_p\bbd_c-\bbn_p \bbn_c}
\left[\begin{array}{cc} 
\bbn_p \\
\bbd_p 
\end{array}\right]
\left[\begin{array}{cc} -\bbn_c & \bbd_c
\end{array}\right]
$$
and
$$
\nm 
 \left[\begin{array}{cc} 
 \widehat{\bbn}_p (\varphi)\\
 \widehat{\bbd}_p (\varphi)
 \end{array}\right]
 \left[\begin{array}{cc} 
 -\widehat{\bbn}_c (\varphi)&
 \widehat{\bbd}_c (\varphi)
 \end{array}\right]
 \nm =
 \sqrt{|\widehat{\bbn}_p(\varphi)|^2+|\widehat{\bbd}_p(\varphi)|^2 }\sqrt{
 |\widehat{\bbn}_c(\varphi)|^2+|\widehat{\bbd}_c(\varphi)|^2},
$$
for $\varphi \in M(S)$.
\end{proof}

It follows from the above that $\mu_{\bbp,\bbc}\leq 1$. Also,
$\mu_{\bbp,\bbc}=\mu_{\bbc,\bbp}$.

\begin{proposition}
\label{proposition_stabilization_index}
Let $\bbp,\bbc\in \mS$ have coprime factorizations 
$$
\bbp=\displaystyle \frac{\bbn_p}{\bbd_p}\textrm{ and } 
\bbc=\displaystyle \frac{\bbn_c}{\bbd_c}.
$$
Then the following are equivalent:
\begin{enumerate}
\item $\bbc$ stabilizes $\bbp$.
\item $\bbn_p \bbn_c -\bbd_p\bbd_c\in \textrm{\em inv }S$ and
  $\iota(\bbn_p \bbn_c -\bbd_p\bbd_c)=\circ$.
\end{enumerate}
\end{proposition}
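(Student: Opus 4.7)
The natural starting point is to rewrite $\bbH(\bbp,\bbc)$ using the given coprime factorizations so that its dependence on $\bbp,\bbc$ appears explicitly through $\bbn_p,\bbd_p,\bbn_c,\bbd_c$. A direct computation (already recorded in the proof of the preceding lemma) yields
$$\bbH(\bbp,\bbc) = \frac{1}{\bbd_p \bbd_c - \bbn_p \bbn_c} \left[\begin{array}{cc} -\bbn_p \bbn_c & \bbn_p \bbd_c \\ -\bbd_p \bbn_c & \bbd_p \bbd_c \end{array}\right].$$
Since the four numerators automatically lie in $R$, the controller $\bbc$ stabilizes $\bbp$ precisely when the common denominator $\bbd_p\bbd_c-\bbn_p\bbn_c$ divides each numerator inside $R$. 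This, in turn, will reduce to asking whether $\bbd_p\bbd_c-\bbn_p\bbn_c$ (equivalently, $\bbn_p\bbn_c-\bbd_p\bbd_c$) is a \emph{unit} in $R$. I will show that this unit condition is exactly what (1) and (2) respectively encode, with (I4) serving as the bridge between invertibility in $R$ and the index condition on $S$.

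\textbf{Forward direction.} Fix B\'ezout witnesses $\bbx_p,\bby_p,\bbx_c,\bby_c \in R$ satisfying $\bbn_p\bbx_p+\bbd_p\bby_p=1$ and $\bbn_c\bbx_c+\bbd_c\bby_c=1$. Multiplying these two identities produces
$$1 \;=\; \bbn_p\bbn_c\,\bbx_p\bbx_c + \bbn_p\bbd_c\,\bbx_p\bby_c + \bbd_p\bbn_c\,\bby_p\bbx_c + \bbd_p\bbd_c\,\bby_p\bby_c.$$
Dividing throughout by $\bbd_p\bbd_c-\bbn_p\bbn_c$ expresses its reciprocal as an $R$-linear combination of (signed) entries of $\bbH(\bbp,\bbc)$, so once (1) is assumed each of those entries is in $R$ and consequently $(\bbd_p\bbd_c-\bbn_p\bbn_c)^{-1}\in R$. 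Thus $\bbn_p\bbn_c-\bbd_p\bbd_c$ is invertible in $R$, hence (via the embedding $I:R\to S$) also in $S$, and (I4) then forces $\iota(\bbn_p\bbn_c-\bbd_p\bbd_c)=\circ$, giving (2).

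\textbf{Backward direction and main obstacle.} Conversely, assuming (2), the element $\bbn_p\bbn_c-\bbd_p\bbd_c$ lies in $R\cap \inv S$ with $\iota$ equal to $\circ$, so (I4) produces its inverse inside $R$; the negative of this inverse is an $R$-inverse for $\bbd_p\bbd_c-\bbn_p\bbn_c$, and substitution in the displayed formula for $\bbH(\bbp,\bbc)$ exhibits each entry as an element of $R$. The only genuine obstacle is the forward direction, where the B\'ezout identities are essential for upgrading the four individual divisibility relations produced by stabilization into a single invertibility statement for the common denominator; once that invertibility is secured, (I4) seamlessly converts it into the required index condition, and no further difficulty arises.
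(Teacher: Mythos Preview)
Your argument is correct and follows the same route as the paper: both directions hinge on the equivalence, via (I4), between $\bbn_p\bbn_c-\bbd_p\bbd_c\in\inv R$ and condition (2), together with the observation that $\bbc$ stabilizes $\bbp$ iff $\bbd_p\bbd_c-\bbn_p\bbn_c\in\inv R$. The paper's proof simply asserts the latter (``it can be seen that $\bbn_p\bbn_c-\bbd_p\bbd_c$ belongs to $\inv R$''), whereas you supply the explicit mechanism---multiplying the two B\'ezout identities and reading off $(\bbd_p\bbd_c-\bbn_p\bbn_c)^{-1}$ as an $R$-linear combination of the entries of $\bbH(\bbp,\bbc)$---which is a clean way to justify that omitted step.
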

\begin{proof} Suppose that $\bbc$ stabilizes $\bbp$. Then it 
can be seen that $\bbn_p\bbn_c-\bbd_p\bbd_c$ belongs to 
$\inv R$.  It follows from (I4) that $\bbn_p \bbn_c -\bbd_p\bbd_c 
\in \inv S$ and that $\iota(\bbn_p \bbn_c -\bbd_p\bbd_c)=\circ$.

On the other hand, if $\bbn_p \bbn_c -\bbd_p\bbd_c\in \inv S$ and
$\iota(\bbn_p \bbn_c -\bbd_p\bbd_c)=\circ$, then again by (I4), we
obtain that $\bbn_p\bbn_c-\bbd_p\bbd_c\in \inv R$. It follows from
here that $\bbH(\bbp,\bbc)$ has each entry in $R$.
\end{proof}

The following elementary fact will be used often in our proofs. 
\begin{lemma}
\label{lemma_cn_fact}
For complex numbers $a,b,\alpha,\beta$, such that $|a|^2+|b|^2>0$ and
$|\alpha|^2+|\beta|^2>0$ there holds that
$$
1-\frac{|a\beta-b\alpha|^2}{(|a|^2+|b|^2)(|\alpha|^2+|\beta|^2)}
=
\frac{|a\overline{\alpha}+b\overline{\beta}|^2}{(|a|^2+|b|^2)(|\alpha|^2+|\beta|^2)}.
$$
\end{lemma}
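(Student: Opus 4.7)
The plan is to clear the common denominator $(|a|^2+|b|^2)(|\alpha|^2+|\beta|^2)$ (which is strictly positive by hypothesis) and reduce the claimed identity to the purely algebraic statement
$$
(|a|^2+|b|^2)(|\alpha|^2+|\beta|^2) \;=\; |a\overline{\alpha}+b\overline{\beta}|^2 + |a\beta-b\alpha|^2.
$$
This is nothing more than the Lagrange identity for the two vectors $(a,b), (\alpha,\beta) \in \mC^2$, and the whole proof reduces to verifying it.

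I would carry this out by a direct expansion: write $|a\overline{\alpha}+b\overline{\beta}|^2 = (a\overline{\alpha}+b\overline{\beta})(\overline{a}\alpha+\overline{b}\beta)$ and $|a\beta-b\alpha|^2 = (a\beta-b\alpha)(\overline{a}\overline{\beta}-\overline{b}\overline{\alpha})$, and observe that the four ``diagonal'' terms combine to give $|a|^2|\alpha|^2+|b|^2|\beta|^2+|a|^2|\beta|^2+|b|^2|\alpha|^2 = (|a|^2+|b|^2)(|\alpha|^2+|\beta|^2)$, while the eight ``cross'' terms cancel pairwise (those of the form $a\overline{b}\beta\overline{\alpha}$ from the first product cancel against the same expression appearing with a minus sign in the second product, and similarly for its conjugate).

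A slicker alternative, which I would probably present, is to consider the $2\times 2$ matrix $M := \begin{pmatrix} a & b \\ \alpha & \beta \end{pmatrix}$. A short computation gives
$$
MM^* \;=\; \begin{pmatrix} |a|^2+|b|^2 & a\overline{\alpha}+b\overline{\beta} \\ \overline{a}\alpha+\overline{b}\beta & |\alpha|^2+|\beta|^2 \end{pmatrix},
$$
so that $\det(MM^*) = (|a|^2+|b|^2)(|\alpha|^2+|\beta|^2) - |a\overline{\alpha}+b\overline{\beta}|^2$; on the other hand $\det(MM^*) = |\det M|^2 = |a\beta - b\alpha|^2$. Equating the two expressions for $\det(MM^*)$ yields the required identity, and dividing through by the positive quantity $(|a|^2+|b|^2)(|\alpha|^2+|\beta|^2)$ produces exactly the formula stated in the lemma. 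There is no real obstacle here: the proof is a one-line polynomial identity, and the only thing to check is that the positivity hypothesis on $|a|^2+|b|^2$ and $|\alpha|^2+|\beta|^2$ is used only to justify the division.
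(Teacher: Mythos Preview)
Your argument is correct. The identity you reduce to is the Lagrange identity for $\mC^2$, and both the direct expansion and the determinant trick with $MM^*$ are valid ways to verify it; the positivity hypothesis is indeed used only to license the final division.

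The paper itself does not supply a proof of this lemma: it is stated as an elementary fact and used directly in Sections~\ref{section_metric} and \ref{section_robustness}. So there is nothing to compare against, but your write-up would serve perfectly well as the omitted verification.
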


The proof of Theorems~\ref{thm_main_1} and \ref{thm_main_2} are analogous to the 
 proofs given in \cite{Vin} (in the case when $R$ is the ring of rational functions bounded in the right half  complex plane) 
 and the proofs given in \cite{BalSas} (in an abstract setting). However, Theorems~\ref{thm_main_1} and \ref{thm_main_2} 
 are not automatic, and do not follow from \cite{BalSas}, since we consider a more general setting 
 than the one considered in \cite{BalSas} (merely coprime factorizations versus normalized coprime factorizations).  

\section{$d_{c,r}$ is a metric}
\label{section_metric}

\begin{proof}[Proof of Theorem~\ref{thm_main_1}] 
The fact that $d_{c,r}$ is nonnegative is trivial. Also, if 
$d_{c,r}(\bbp_1,\bbp_2)=0$, then it is clear that
$\bbp_1=\bbp_2$. Symmetry is also easy to check. We only check the
triangle inequality. Suppose that $\bbp_1, \bbp_2, \bbp_0 \in \mS$.
We want to show that
$$
d_{c,r} (\bbp_1, \bbp_2) \leq d_{c,r} (\bbp_1, \bbp_0) 
+d_{c,r} (\bbp_0, \bbp_2).
$$
Since $d_{c,r}$ is bounded above by $1$, this inequality is trivially
satisfied if either $d_{c,r}(\bbp_1, \bbp_0)=1$ or $d_{c,r} (\bbp_0,
\bbp_2)=1$. So we assume that $d_{c,r}(\bbp_1, \bbp_0)<1$ and $d_{c,r}
(\bbp_0, \bbp_2)<1$.  Let
$$
\bbp_0=\frac{\bbn_0}{\bbd_0}, \quad 
\bbp_1=\frac{\bbn_1}{\bbd_1}, \quad 
\bbp_2=\frac{\bbn_2}{\bbd_2},
$$
be coprime factorizations. If $\bbn_1^* \bbn_2+\bbd_1^*\bbd_2\in \inv
S$ and if $\iota (\bbn_1^* \bbn_2+\bbd_1^*\bbd_2)=\circ$, then the
inequality follows from the triangle inequality in $\mR^3$, that is
from the triangle inequality for the generalized chordal metric; see
\cite{Sasb}.

So we will assume that $ \neg \Big[ \bbn_1^* \bbn_2+\bbd_1^*\bbd_2\in
\inv S \textrm{ and } \iota (\bbn_1^*
\bbn_2+\bbd_1^*\bbd_2)=\circ\Big]$. Let
\begin{eqnarray*}
A
&:=& 
\underbrace {(\bbn_1^* \bbn_0 + \bbd_1^*\bbd_0)}_{\in \inv S}
\underbrace{(\bbn_0^* \bbn_2+\bbd_0^* \bbd_2)}_{\in \inv S}\in \inv S,\\
B
&:=& 
(-\bbn_1^* \bbd_0^* + \bbd_1^*\bbn_0^*)(-\bbd_0 \bbn_2+\bbn_0 \bbd_2).
\end{eqnarray*}
Suppose that $\|A^{-1}B \|_{S,\infty}<1$. Then 
$$
(\bbn_1^* \bbn_2+\bbd_1^*\bbd_2)\cdot 
\underbrace{(|\bbn_0|^2+|\bbd_0|^2)}_{\in \inv S}
=
A+B=\underbrace{A(1+A^{-1}B)}_{\in \inv S},
$$
and so $\bbn_1^* \bbn_2+\bbd_1^*\bbd_2\in \inv S$. Let 
$H:[0,1]\rightarrow \inv S$ be defined by $H(t)=A(1+tA^{-1}B)$, $t\in
[0,1]$. By the homotopic invariance of the index,
$\iota(A)=\iota(H(0))=\iota(H(1))=\iota(A+B)$. But
$\iota(A)=\circ+\circ=\circ$. And using (I5),
$\iota(A+B)=\iota(\bbn_1^* \bbn_2+\bbd_1^*\bbd_2)+\circ$. Thus $ \iota
(\bbn_1^* \bbn_2+\bbd_1^*\bbd_2)=\circ$, a contradiction.

So it can't be the case that $\|A^{-1}B\|_{S,\infty}<1$.  Since $M(S)$
is compact, it follows from here that there exists a $\varphi_0\in
M(S)$ such that $|\widehat{B}(\varphi_0)|\geq
|\widehat{A}(\varphi_0)|$. Using Lemma~\ref{lemma_cn_fact}, we obtain
that
\begin{eqnarray*}
&&
\Big(1-(\kappa_{\bbp_0,\bbp_1}(\varphi_0))^2\Big)
\Big(1-(\kappa_{\bbp_0,\bbp_2}(\varphi_0))^2\Big)
\phantom{\displaystyle\frac{\widehat{a}}{\widehat{b}}}
\\
&=&
\frac{|\widehat{A}(\varphi_0)|^2}{\left(|\widehat{\bbn}_0 (\varphi_0)|^2
 +|\widehat{\bbd}_0 (\varphi_0)|^2 \right)^2
\left(|\widehat{\bbn}_1 (\varphi_0)|^2+|\widehat{\bbd}_1 (\varphi_0)|^2\right)
\left(
|\widehat{\bbn}_2 (\varphi_0)|^2+|\widehat{\bbd}_2 (\varphi_0)|^2\right)}
\\
&\leq &
\frac{|\widehat{B}(\varphi_0)|^2}{
\left(|\widehat{\bbn}_0 (\varphi_0)|^2
+|\widehat{\bbd}_0 (\varphi_0)|^2 \right)^2 
\left(|\widehat{\bbn}_1 (\varphi_0)|^2+|\widehat{\bbd}_1 (\varphi_0)|^2\right)
\left(
|\widehat{\bbn}_2 (\varphi_0)|^2+|\widehat{\bbd}_2 (\varphi_0)|^2\right)}
\\
&=&
\Big(\kappa_{\bbp_0,\bbp_1}(\varphi_0)\Big)^2\cdot \Big((\kappa_{\bbp_0,\bbp_2}(\varphi_0)\Big)^2.
\phantom{\displaystyle\frac{\widehat{a}}{\widehat{b}}}
\end{eqnarray*}
Upon rearranging, we obtain $(\kappa_{\bbp_0,\bbp_1}(\varphi_0))^2+
(\kappa_{\bbp_0,\bbp_2}(\varphi_0))^2\geq 1$, and so
\begin{eqnarray*}
\Big(d_{c,r}(\bbp_0,\bbp_1)+d_{c,r}(\bbp_0,\bbp_2)\Big)^2 
&=&
\Big(\kappa_{\bbp_0,\bbp_1}(\varphi_0)+ \kappa_{\bbp_0,\bbp_2}(\varphi_0)\Big)^2
\\
&\geq& 
\Big(\kappa_{\bbp_0,\bbp_1}(\varphi_0)\Big)^2+
\Big(\kappa_{\bbp_0,\bbp_2}(\varphi_0)\Big)^2
\\
&\geq& 1=\Big(d_{c,r}(\bbp_1,\bbp_2)\Big)^2.
\end{eqnarray*}
This completes the proof of the triangle inequality, and also the
proof of Theorem \ref{thm_main_1}.
\end{proof}

\section{Stabilizability is a robust property}
\label{section_robustness}

\begin{proof}[Proof of Theorem~\ref{thm_main_2}] 
If $d_{c,r}(\bbp,\bbp_0)\geq \mu_{\bbp_0,\bbc}$, then trivially we have
$$
\mu_{\bbp,\bbc}\geq 0\geq \mu_{\bbp_0,\bbc}-d_{c,r}(\bbp,\bbp_0).
$$
So let us suppose that $d_{c,r}(\bbp,\bbp_0)<
\mu_{\bbp_0,\bbc}$. Since $\mu_{\bbp_0,\bbc}\leq 1$, it follows that
$d_{c,r}(\bbp,\bbp_0)<1 $. Also, $\mu_{\bbp_0,\bbc}\neq 0$, since
otherwise we would have $d_{c,r}(\bbp,\bbp_0)<0$, which is
impossible. So $\mu_{\bbp_0,\bbc}>0$. Hence $\bbc$ stabilizes
$\bbp_0$. We have
\begin{eqnarray*}
d_{c,r}(\bbp,\bbp_0)
&=& 
\sup_{\varphi \in M(S)}\frac{|\widehat{\bbn}_p(\varphi)\widehat{\bbd}_{p,0}(\varphi)
-\widehat{\bbn}_p(\varphi)\widehat{\bbd}_{p,0}(\varphi)|}{
\sqrt{|\widehat{\bbn}_{p}(\varphi)|^2+|\widehat{\bbd}_{p}(\varphi)|^2} 
\sqrt{|\widehat{\bbn}_{p,0}(\varphi)|^2+|\widehat{\bbd}_{p,0}(\varphi)|^2}}
\\
&< & 
\inf_{\varphi \in M(S)}
\frac{|\widehat{\bbn}_{p,0}(\varphi)\widehat{\bbn}_c(\varphi)-
  \widehat{\bbd}_{p,0}(\varphi)\widehat{\bbd}_c(\varphi)|}{
  \sqrt{|\widehat{\bbn}_{p,0}(\varphi)|^2+|\widehat{\bbd}_{p,0}(\varphi)|^2
  }\sqrt{
    |\widehat{\bbn}_c(\varphi)|^2+|\widehat{\bbd}_c(\varphi)|^2}}
\\
&=&\mu_{\bbp_0,\bbc}.
\phantom{\frac{\widehat{\bbn}_{p,0}(\varphi)}{\widehat{\bbn}_{p,0}(\varphi)}}
\end{eqnarray*}
So for all $\varphi \in M(S)$, 
\begin{eqnarray}
&&
\nonumber
\frac{|\widehat{\bbn}_p(\varphi)\widehat{\bbd}_{p,0}(\varphi)
-\widehat{\bbn}_p(\varphi)\widehat{\bbd}_{p,0}(\varphi)|}{
\sqrt{|\widehat{\bbn}_{p}(\varphi)|^2+|\widehat{\bbd}_{p}(\varphi)|^2} 
\sqrt{|\widehat{\bbn}_{p,0}(\varphi)|^2+|\widehat{\bbd}_{p,0}(\varphi)|^2}}
\\
\label{eq_thm_main_2_a}
&<&
 \frac{|\widehat{\bbn}_{p,0}(\varphi)\widehat{\bbn}_c(\varphi)-
  \widehat{\bbd}_{p,0}(\varphi)\widehat{\bbd}_c(\varphi)|}{
  \sqrt{|\widehat{\bbn}_{p,0}(\varphi)|^2+|\widehat{\bbd}_{p,0}(\varphi)|^2
  }\sqrt{
    |\widehat{\bbn}_c(\varphi)|^2+|\widehat{\bbd}_c(\varphi)|^2}}.
\end{eqnarray}
Using the elementary result in Lemma~\ref{lemma_cn_fact}, we obtain 
\begin{eqnarray}
\nonumber&&
\frac{|(\widehat{\bbd}_{p,0}(\varphi))^* \widehat{\bbn}_{c}(\varphi)
+(\widehat{\bbn}_{p,0}(\varphi))^*\widehat{\bbd}_{c}(\varphi)|}{
\sqrt{|\widehat{\bbn}_{c}(\varphi)|^2+|\widehat{\bbd}_{c}(\varphi)|^2} 
\sqrt{|\widehat{\bbn}_{p,0}(\varphi)|^2+|\widehat{\bbd}_{p,0}(\varphi)|^2}}
\\
\label{eq_thm_main_2_b}
&<&
 \frac{|(\widehat{\bbn}_{p,0}(\varphi))^*\widehat{\bbn}_{p}(\varphi)+
  (\widehat{\bbd}_{p,0}(\varphi))^*\widehat{\bbd}_p(\varphi)|}{
  \sqrt{|\widehat{\bbn}_{p,0}(\varphi)|^2+|\widehat{\bbd}_{p,0}(\varphi)|^2
  }\sqrt{
    |\widehat{\bbn}_p(\varphi)|^2+|\widehat{\bbd}_p(\varphi)|^2}}.
\end{eqnarray}
Since all quantities in the inequalities \eqref{eq_thm_main_2_a},
\eqref{eq_thm_main_2_b} are positive, we may multiply them to obtain
\begin{eqnarray}
\nonumber 
&&
|(\widehat{\bbd}_{p,0}(\varphi))^* \widehat{\bbn}_{c}(\varphi)
+(\widehat{\bbn}_{p,0}(\varphi))^*\widehat{\bbd}_{c}(\varphi)|
|\widehat{\bbn}_p(\varphi)\widehat{\bbd}_{p,0}(\varphi)
-\widehat{\bbn}_p(\varphi)\widehat{\bbd}_{p,0}(\varphi)|
\\
\label{eq_thm_main_2_c}
&<&
|(\widehat{\bbn}_{p,0}(\varphi))^*\widehat{\bbn}_{p}(\varphi)+
  (\widehat{\bbd}_{p,0}(\varphi))^*\widehat{\bbd}_p(\varphi)|
|\widehat{\bbn}_{p,0}(\varphi)\widehat{\bbn}_c(\varphi)-
  \widehat{\bbd}_{p,0}(\varphi)\widehat{\bbd}_c(\varphi)|
\end{eqnarray}
Define 
\begin{eqnarray*}
A&:=& 
\underbrace{(\bbn_{p,0}^* \bbn_p +\bbd_{p,0}^*\bbd_p)}_{\in 
\inv S \textrm{ as } d_{c,r}(\bbp,\bbp_0)<1}
\cdot 
\underbrace{(-\bbn_{c} \bbn_{p,0} +\bbd_{c}\bbd_{p,0})}_{\in  
\inv S\textrm{ as } \bbc \textrm{ stabilizes }\bbp_{0}}
\in \inv S,\\
B&:=& 
(-\bbn_{p} \bbd_{p,0} +\bbd_{p,0}\bbd_{p})\cdot 
(\bbd_{p,0}^* \bbn_c +\bbn_{p,0}^*\bbd_c).
\end{eqnarray*}
From \eqref{eq_thm_main_2_c}, we know that $\|A^{-1} B\|_{S,\infty}<1$. 
Since
$$
(\bbn_p \bbn_c -\bbd_p\bbd_c) \underbrace{(|\bbn_{p,0}|^2
+ |\bbd_{p,0}|^2) }_{\in \inv S} 
=A+B=\underbrace{A(1+A^{-1}B)}_{\inv S},
$$
it follows that $\bbn_p \bbn_c -\bbd_p\bbd_c\in \inv S$. Define the
map $H:[0,1]\rightarrow \inv S$ by $H(t)=A(1+tA^{-1}B)$, $t\in [0,1]$.
By Proposition~\ref{prop_hom_inv}, $\iota(H(0))=\iota(H(1))$. But
$\iota(H(0))=\iota(A)=\circ +\circ =\circ$, and 
$$
\iota(H(1))
=\iota(\bbn_p \bbn_c -\bbd_p\bbd_c)+\circ=\iota(\bbn_p \bbn_c
-\bbd_p\bbd_c).
$$
Consequently, $\iota(\bbn_p \bbn_c -\bbd_p\bbd_c)=\circ$. It follows
from Proposition~\ref{proposition_stabilization_index} that $\bbc$
stabilizes $\bbp$.

It is easy to check that
\begin{eqnarray*}
&&
(\bbn_p \bbn_c-\bbd_p \bbd_c)(|\bbn_{p,0}|^2+|\bbd_{p,0}|^2)
\\
&=&
(\bbn_{p,0}^*\bbn_p +\bbd_{p,0}^*\bbd_p)
(\bbn_c\bbn_{p,0}-\bbd_c \bbd_{p,0})+
(\bbn_p \bbd_{p,0}-\bbn_{p,0}\bbd_p)
(\bbn_c \bbd_{p,0}^*+\bbn_{p,0}^* \bbd_{c}).
\end{eqnarray*}
Thus for $\varphi \in M(S)$,
\begin{eqnarray*}
&&
\frac{|\widehat{\bbn}_p (\varphi)\widehat{\bbn}_c(\varphi)
-\widehat{\bbd}_p(\varphi) \widehat{\bbd}_c(\varphi)|}{
\sqrt{|\widehat{\bbn}_p(\varphi)|^2+|\widehat{\bbd}_p(\varphi)|^2}
\sqrt{|\widehat{\bbn}_c(\varphi)|^2+|\widehat{\bbd}_c(\varphi)|^2}}
\\
&=&
\left|
\frac{\left(\widehat{\bbn}_{p,0}^*(\varphi)\widehat{\bbn}_p(\varphi) 
+\widehat{\bbd}_{p,0}^*(\varphi)\widehat{\bbd}_p(\varphi)\right)
\left(\widehat{\bbn}_c(\varphi)\widehat{\bbn}_{p,0}(\varphi)-\widehat{\bbd}_c(\varphi) \widehat{\bbd}_{p,0}(\varphi)\right)}{
\sqrt{|\widehat{\bbn}_p(\varphi)|^2+|\widehat{\bbd}_p(\varphi)|^2}
\sqrt{|\widehat{\bbn}_c(\varphi)|^2+|\widehat{\bbd}_c(\varphi)|^2}
\left(|\widehat{\bbn}_{p,0}(\varphi)|^2+|\widehat{\bbd}_{p,0}(\varphi)|^2\right)
}
\right.
\\
&& 
+ \left. 
\frac{\left(\widehat{\bbn}_p (\varphi)\widehat{\bbd}_{p,0}(\varphi)-\widehat{\bbn}_{p,0}(\varphi)\widehat{\bbd}_p(\varphi)\right)
\left(\widehat{\bbn}_c(\varphi) \widehat{\bbd}_{p,0}^*(\varphi)+\widehat{\bbn}_{p,0}^*(\varphi) \widehat{\bbd}_{c}(\varphi)\right)
}{
\sqrt{|\widehat{\bbn}_p(\varphi)|^2+|\widehat{\bbd}_p(\varphi)|^2}
\sqrt{|\widehat{\bbn}_c(\varphi)|^2+|\widehat{\bbd}_c(\varphi)|^2}
\left(|\widehat{\bbn}_{p,0}(\varphi)|^2+|\widehat{\bbd}_{p,0}(\varphi)|^2\right)}
\right|
\\
&\geq &
(\sin \alpha )\cdot (\cos \beta)-(\cos \alpha)\cdot ( \sin \beta)=\sin (\alpha-\beta), 
\phantom{\displaystyle \frac{\left(\widehat{\bbn}_p^2\right)}{\left(\widehat{\bbn}_{p}^2\right)}}
\end{eqnarray*}
where $\alpha,\beta\in [0,\pi/2]$ are such that 
\begin{eqnarray*}
\sin \alpha 
&=& 
\frac{|-\widehat{\bbn}_{p,0}(\varphi)\widehat{\bbn}_c(\varphi) 
+\widehat{\bbd}_{p,0}(\varphi)\widehat{\bbd}_c(\varphi)|}{
\sqrt{|\widehat{\bbn}_c(\varphi)|^2+|\widehat{\bbd}_c(\varphi)|^2}
\sqrt{|\widehat{\bbn}_{p,0}(\varphi)|^2+|\widehat{\bbd}_{p,0}(\varphi)|^2}},
\\
\sin \beta
&=& 
\frac{|-\widehat{\bbn}_{p}(\varphi)\widehat{\bbd}_{p,0}(\varphi) 
+\widehat{\bbn}_{p,0}(\varphi)\widehat{\bbd}_p(\varphi)|}{
\sqrt{|\widehat{\bbn}_p(\varphi)|^2+|\widehat{\bbd}_p(\varphi)|^2}
\sqrt{|\widehat{\bbn}_{p,0}(\varphi)|^2+|\widehat{\bbd}_{p,0}(\varphi)|^2}}.
\end{eqnarray*}
Then 
\begin{eqnarray*}
\cos \alpha 
&=& 
\frac{|\widehat{\bbn}_{c}(\varphi)\widehat{\bbd}_{p,0}^*(\varphi) 
+\widehat{\bbd}_{c}(\varphi)\widehat{\bbn}_{p,0}^*(\varphi)|}{
\sqrt{|\widehat{\bbn}_c(\varphi)|^2+|\widehat{\bbd}_c(\varphi)|^2}
\sqrt{|\widehat{\bbn}_{p,0}(\varphi)|^2+|\widehat{\bbd}_{p,0}(\varphi)|^2}},
\\
\cos \beta
&=& 
\frac{|\widehat{\bbn}_{p}(\varphi)\widehat{\bbn}_{p,0}^*(\varphi) 
+\widehat{\bbd}_{p,0}(\varphi)\widehat{\bbd}_{p,0}^*(\varphi)|}{
\sqrt{|\widehat{\bbn}_p(\varphi)|^2+|\widehat{\bbd}_p(\varphi)|^2}
\sqrt{|\widehat{\bbn}_{p,0}(\varphi)|^2+|\widehat{\bbd}_{p,0}(\varphi)|^2}}.
\end{eqnarray*}
Since $\sin^{-1}:[0,1]\rightarrow [0,\pi/2]$ is increasing, we obtain for $\varphi \in M(S)$,
$$
\sin^{-1} 
\frac{|\widehat{\bbn}_p (\varphi)\widehat{\bbn}_c(\varphi)
-\widehat{\bbd}_p(\varphi) \widehat{\bbd}_c(\varphi)|}{
\sqrt{|\widehat{\bbn}_p(\varphi)|^2+|\widehat{\bbd}_p(\varphi)|^2}
\sqrt{|\widehat{\bbn}_c(\varphi)|^2+|\widehat{\bbd}_c(\varphi)|^2}}
\geq  
\alpha -\beta.
$$
But 
\begin{eqnarray*}
\alpha-\beta
&=&
\sin^{-1} 
\frac{|-\widehat{\bbn}_{p,0}(\varphi)\widehat{\bbn}_c(\varphi) 
+\widehat{\bbd}_{p,0}(\varphi)\widehat{\bbd}_c(\varphi)|}{
\sqrt{|\widehat{\bbn}_c(\varphi)|^2+|\widehat{\bbd}_c(\varphi)|^2}
\sqrt{|\widehat{\bbn}_{p,0}(\varphi)|^2+|\widehat{\bbd}_{p,0}(\varphi)|^2}}
\\
&& 
-
\sin^{-1} 
\frac{|-\widehat{\bbn}_{p}(\varphi)\widehat{\bbd}_{p,0}(\varphi) 
+\widehat{\bbn}_{p,0}(\varphi)\widehat{\bbd}_p(\varphi)|}{
\sqrt{|\widehat{\bbn}_p(\varphi)|^2+|\widehat{\bbd}_p(\varphi)|^2}
\sqrt{|\widehat{\bbn}_{p,0}(\varphi)|^2+|\widehat{\bbd}_{p,0}(\varphi)|^2}}
\\
&\geq & 
\sin^{-1} \mu_{\bbp_0,\bbc}
-
\sin^{-1} 
\frac{|-\widehat{\bbn}_{p}(\varphi)\widehat{\bbd}_{p,0}(\varphi) 
+\widehat{\bbn}_{p,0}(\varphi)\widehat{\bbd}_p(\varphi)|}{
\sqrt{|\widehat{\bbn}_p(\varphi)|^2+|\widehat{\bbd}_p(\varphi)|^2}
\sqrt{|\widehat{\bbn}_{p,0}(\varphi)|^2+|\widehat{\bbd}_{p,0}(\varphi)|^2}}
\\
&\geq & 
\sin^{-1} \mu_{\bbp_0,\bbc}
-
\sin^{-1} d_{c,r}(\bbp,\bbp_0).
\phantom{\displaystyle \frac{\left(\widehat{\bbn}_p^2\right)}{\left(\widehat{\bbn}_{p}^2\right)}}
\end{eqnarray*}
Hence 
\begin{equation}
\label{eq_thm_main_2_d}
\sin^{-1} \mu_{\bbp,\bbc}\geq \sin^{-1} \mu_{\bbp_0,\bbc}-\sin^{-1} d_{c,r}(\bbp,\bbp_0). 
\end{equation}
For $x,y,z\in [0,1]$, if $ \sin^{-1} x\leq \sin^{-1} y+\sin^{-1} z  $, then 
 by taking the cosine of both sides and using that  $\cos$ is a
decreasing function on $[0, \frac{\pi}{2}]$, we  get $
\sqrt{1-x^2} \geq \sqrt{1-y^2} \sqrt{1-z^2}-yz$, which in turn implies
that $(\sqrt{1-x^2}+yz)^2\geq (1-y^2)(1-z^2)$.  Hence 
$$ 
x^2 \leq y^2
+z^2+2yz\sqrt{1-x^2}\leq y^2 +z^2+2yz\cdot 1=(y+z)^2,
$$
which gives finally that $ x\leq y+z$.  The claimed inequality now
follows immediately from the inequality in \eqref{eq_thm_main_2_d}
upon setting $x = \mu_{\bbp_0,\bbc}$, $y = d_{c,r}(\bbp_0,\bbp)$ and
$z = \mu_{\bbp,\bbc}$.
\end{proof}

\section{Specific instances of $R$}

Table~\ref{table} below gives an overview of the choice of 
principal objects $S,G,\iota$ specific to choices of $R$ as the 
standard classes of stable transfer functions used in control theory. 

\begin{table}[!hbp]
\begin{center}
\begin{tabular}{|c||c|c|c|}\hline 
$R\phantom{\Big(}$ & $S$ & $G$ & $\iota$ \\ \hline \hline 
$\begin{array}{c}
{RH^\infty}^{\phantom{\Big(}}\\
A(\mD),\\
W^+(\mD),\\
\widehat{L^1[0,\infty)}+\mC,\\
\cdots 
\end{array}$ & $C(\mT)$ & $\mZ$ & $f\mapsto w(f)$ \\ \hline
$\calA_+ {\phantom{\Bigg(}}$ & $C_0+AP$ & $\mR\times \mZ$ & $f=f_0+f_{AP}\mapsto 
\Big( w_{\textrm{av}}(f_{AP})+ w(1+f_{AP}^{-1} \widehat{f}_{0})\Big)$
\\
\hline  
$H^\infty$ & $\begin{array}{ccc}
\underrightarrow{\lim}\;C_{b}(\mathbb{A}_{r})^{\phantom{\Big(}}\\
\textrm{\rotatebox[origin=c]{270}{$\simeq$}}\\
C(\beta \mA_0 \setminus \mA_0)_{\phantom{\Big(}}
\end{array}$  & $\mZ$ & $[(f_r)_r] \mapsto \displaystyle \lim_{r\rightarrow 1} w(f_r)$ \\ \hline 
\end{tabular}
\label{table}

$\;$

\caption{Choices of $S, G,\iota$ corresponding to specific instances of $R$.}
\end{center}
\end{table}

\subsection{$R=RH^\infty, A(\mD), W^+(\mD), \widehat{L^1[0,\infty)}+\mC,\cdots$}

Let \label{def_subsets_of_C} 
\begin{eqnarray*}
\mC_{\scriptscriptstyle >0}&:=&\{s\in \mC:\textrm{Re}(s)>0\},\\
\mD&:=& \{z\in \mC:|z|<1\},\\
\mT&:=& \{z\in \mC:|z|=1\},\\
\overline{\mD}&:=& \mD\cup \mT.
\end{eqnarray*}
Recall that \label{def_classical_algebras}
\begin{eqnarray*}
 RH^\infty&:=&\{ f:\mC_{\scriptscriptstyle >0}\rightarrow \mC: 
 f \textrm{ is rational and  bounded in }\mC_{\scriptscriptstyle >0}\},
 \phantom{\sum_{n=0}^\infty }\\
 A(\mD)&:=&\{f:\overline{\mD} \rightarrow \mC: 
 f\textrm{ is holomorphic in }\mD\textrm{ and continuous in }
 \overline{\mD}\},\phantom{\sum_{n=0}^\infty }\\
 W^+(\mD)&:=&\{f: f(z)=\sum_{n=0}^\infty a_n z^n \;(z\in \overline{\mD})\textrm{ and }\sum_{n=0}^\infty |a_n|<+\infty\},\\
 \widehat{L^1[0,\infty)}+\mC&:=& \{ f: f(s)=\widehat{f}_{\textrm{a}}(s)+ f_0 \;(s\in \mC_{\scriptscriptstyle >0}), 
 \textrm{ where } f_{\textrm{a}}\in L^1[0,\infty), \; f_0\in \mC\}.\phantom{\sum_{n=0}^\infty }
\end{eqnarray*}
In the above, $\widehat{f}_{\textrm{a}}$ \label{def_Laplace} denotes the 
Laplace transform of $f_{\textrm{a}}\in L^1[0,\infty)$. We remark that 
the set of bounded and holomorphic functions defined in the open right half plane, and which   
possess a continuous extension to $i\mR \cup \{\infty_{\scriptscriptstyle \mC}\}$,  
can be transplanted to functions on the closed unit disk using the conformal map given by
$$
\varphi(z)=\frac{1+z}{1-z} \quad (z\in {\mathbb{D}}).
$$
In this manner we may think of elements of $\widehat{L^1[0,\infty)}+\mC$ as being elements of 
the disc algebra.

In each of these cases, the values of the function on the boundary of the domain of 
definition gives rise to a function which can be considered to be an element of the 
$C^*$-algebra \label{def_C(T)}
$$
S:=C(\mT):=\{f:\mT\rightarrow \mC: f\textrm{ is continuous on }\mT\}.
$$
We take $G=\mZ$, and $\iota:\inv C(\mT)\rightarrow \mZ$ to be the winding number $w$ 
with respect to the origin:\label{def_winding_number}
$$
\iota (f):=w(f),\quad f\in \inv C(\mT).
$$
Then it can be checked that (A1)-(A3) hold. Most of the details can be found in \cite{BalSas}, 
except for  (I5), but this verification is obvious. 
Moreover,  the $\|\cdot\|_{S,\infty}$-norm in the definition of the 
$d_{c,r}$-metric is the usual supremum $\|\cdot\|_\infty$-norm of functions in $C(\mT)$. 

\subsection{$R=\calA_+$} 

Recall that \label{def_Callier_Desoer}
$$
\calA^+=\left\{ s (\in \mC_{\scriptscriptstyle >0}) \mapsto \widehat{f_{\textrm{a}}}(s)
  +\displaystyle \sum_{k=0}^\infty a_k e^{- s t_k} \; \bigg| \;
\begin{array}{ll}
f_{\textrm{a}} \in L^{1}[0,\infty), \;(a_k)_{k\geq 0} \in \ell^{1},\\
0=t_0 <t_1 ,t_2 , t_3, \dots
\end{array} \right\}
$$
Let \label{def_C_0}
\begin{eqnarray*}
 C_0&:=&\{f:\mR\rightarrow \mC: f\textrm{ is continuous on }\mR\textrm{ and }\lim_{x\rightarrow\pm\infty }f(x)=0\}\\
 AP&:=&\textrm{closed span in }L^\infty(\mR)\textrm{ of } \{(\mR\owns)x\mapsto e^{i\lambda x}:\lambda\in \mR\}.
\end{eqnarray*}
$C_0+AP$, endowed with pointwise operations, with the supremum norm, and with involution given by 
pointwise complex conjugation,  is a sub-$C^*$-algebra of $L^\infty(\mR)$; \cite{Dou}. 
We will take $S:=C_0+AP$. Moreover, 
we take $G=\mR\times \mZ$, and define $\iota:\inv (C_0+AP)\rightarrow \mR\times \mZ$ by 
$$
\iota(f)=\Big( w_{\textrm{av}}(f_{AP}), \; w(1+f_{AP}^{-1} f_0) \Big), \quad f=f_0+ f_{AP}\in \inv( C_0+AP), 
\;f_0\in C_0,\;f_{AP}\in AP.
$$
In the above, $w_{\textrm{av}}:\inv AP\rightarrow \mR$ \label{def_average_winding} denotes the average winding number, defined by 
$$
w_{\textrm{av}}(f_{AP}):=\lim_{x \rightarrow +\infty} \frac{\arg (f(x))-\arg(f(-x))}{2x},\quad f_{AP}\in \inv AP,
$$
see \cite[Theorem 1, p. 167]{JesTor}. Again, it can be checked that (A1)-(A3) hold; see \cite{BalSas}. 
Since $C_0+AP$ is a sub-$C^*$-algebra of $L^\infty(\mR)$, the $\|\cdot\|_{S,\infty}$-norm in the definition of the 
$d_{c,r}$-metric is the usual $\|\cdot\|_\infty$-norm of functions in $L^\infty(\mR)$. 

\subsection{$R=H^\infty$}

The Hardy algebra $H^{\infty}$ consists of all bounded and holomorphic
functions defined on the open unit disk $\mathbb{D}:=\{z\in\mathbb{C}
:|z|<1\}$, with pointwise operations and the usual supremum norm. 
We recall the construction of $S$ from \cite{FreSas}. For given $r\in(0,1),$ let\label{def_annulus}
$$
\mathbb{A}_{r}:=\{z\in\mathbb{C}:r<|z|<1\}
$$
denote the open annulus. In particular, 
$$
\mathbb{A}_{0}:=\{z\in\mathbb{C}:0<|z|<1\}
$$
is the punctured disc. 
Let $C_{b}(\mathbb{A}_{r})$ be the $C^{\ast}
$-algebra of all bounded and continuous functions $f:\mathbb{A}_{r}
\rightarrow\mathbb{C}$, equipped with pointwise operations and the supremum
norm. Moreover, for $0<r\leq R<1$ we define the map $\pi_{r}^{R}:C_{b}
(\mathbb{A}_{r})\rightarrow C_{b}(\mathbb{A}_{R})$ by restriction:\label{def_pi_r^R}
$$
\pi_{r}^{R}(f)=f|_{\mathbb{A}_{R}}, \quad f\in C_{b} (\mathbb{A}_{r}).
$$
Consider the family $\left(  C_{b}(\mathbb{A}_{r}),\pi_{r}^{R}\right)  $ \label{def_C_b} for
$0<r\leq R<1$. We note that
\begin{itemize}
\item[(i)] $\pi_{r}^{r}$ is the identity map on $C_{b}(\mathbb{A}_{r})$, and

\item[(ii)] $\pi_{r}^{R}\circ\pi_{\rho}^{r}=\pi_{\rho}^{R}$ for all
$0<\rho\leq r\leq R<1.$
\end{itemize}
Now consider the $\ast$-algebra\label{def_Pi_C_b}
\[
{\displaystyle\prod\limits_{r\in(0,1)}} C_{b}(\mathbb{A}_{r}),
\]
and denote by $\mathcal{A}$\label{def_calA} its $\ast$-subalgebra consisting of all elements
$f=(f_{r})=(f_{r})_{r\in(0,1)}$ such that there is an index $r_{0}$ with
$\pi_{r}^{R}(f_{r})=f_{R}$ for all $0<r_{0}\leq r\leq R<1$. Since every
$\pi_{r}^{R}$ is norm decreasing, the net $(\|f_{r}\|_{\infty})$ is convergent and we define
\[
\|f\|:=\lim_{r\rightarrow1}\|f_{r}\|_{\infty}.
\]
Clearly this defines a seminorm on $\mathcal{A}$ that satisfies the $C^{\ast}
$-norm identity, that is,
\[
\|f^{\ast}f\|=\|f\|^{2},
\]
where $\cdot^{\ast}$ is the involution, that is, complex conjugation, see
(\ref{involution}) below. Now, if $N$ \label{def_N} is the kernel of $\|\cdot\|$, then the
quotient $\mathcal{A}/N$ is a $C^{\ast}$-algebra (and we denote the norm again
by $\|\cdot\|$). This algebra is the  \emph{direct}/\emph{inductive limit} 
of $(C_{b}(\mathbb{A}_{r}),\pi_{r}^{R})$ and we denote
it by\label{def_dir_lim}
\[
\underrightarrow{\lim}\;C_{b}(\mathbb{A}_{r}).
\]
To every element $f\in C_{b}(\mathbb{A}_{r_{0}}),$ we associate a sequence
$f_{1}=(f_{r})$ in $\mathcal{A}$, where
\begin{equation}
f_{r}=\left\{
\begin{array}
[c]{ll}
0 & \text{if }0<r<r_{0},\\
\pi_{r_{0}}^{r}(f) & \text{if }r_{0}\leq r<1.
\end{array}
\right. \label{f1}%
\end{equation}
We also define a map $\pi_{r}:C_{b}(\mathbb{A}_{r})\rightarrow
\underrightarrow{\lim}\;C_{b}(\mathbb{A}_{r})$ by\label{def_pi_r}
$$
\pi_{r}(f):=[f_{1}],\quad f\in C_{b}(\mathbb{A}_{r}),
$$
where $[f_{1}]$ denotes the equivalence class in $\underrightarrow{\lim
}\;C_{b}(\mathbb{A}_{r})$ which contains $f_{1}$. We will use the fact that
the maps $\pi_{r}$ are in fact $\ast$-homomorphisms. We note that these maps
are compatible with the connecting maps $\pi_{r}^{R}$ in the sense that every
diagram shown below is commutative.
$$
\xymatrix{
C_b(\mA_r) \ar[r]^{\pi_r^R} \ar[rd]_{\pi_r} & C_b(\mA_R) \ar[d]^{\pi_R} \\ &
\displaystyle {\lim_{\longrightarrow} C_b(\mA_r)}
}
$$
Then $\underrightarrow{\lim}\;C_{b}(\mathbb{A}_{r})$ is a $C^\ast$-algebra, 
see \cite[Section~2.6]{LOT99}. The multiplicative identity arises from the constant function
$f\equiv1 $ in $C_{b}(\mathbb{A}_{0})$, that is, $\pi_{0}(f)$. Moreover, we
can define an involution in $C_{b}(\mathbb{A}_{r})$ by setting
\begin{equation}
(f^{\ast})(z):=\overline{f(z)},\ \ \ z\in\mathbb{A}_{r},\label{involution}%
\end{equation}
and this implicitly defines an involution of elements in
$\underrightarrow{\lim}\; C_{b}(\mathbb{A}_{r})$. 
There is a natural embedding of $H^{\infty}$ into
$\underrightarrow{\lim}\; C_{b}(\mathbb{A}_{r})$, namely
\begin{equation}
f\mapsto\pi_{0}(f):H^{\infty}\longrightarrow\underrightarrow{\lim}\; C_{b}
(\mathbb{A}_{r}).\label{injektiv}
\end{equation}
We will take $G=\mZ$. For $f\in{\text{inv }}
(C_{b}(\mathbb{A}_{\rho}))$ and for $0<\rho<r<1$ we define the map
$f^{r}:\mathbb{T}\rightarrow\mathbb{C}$ by $f^{r}(\zeta)=f(r\zeta)$, 
$\zeta\in\mathbb{T}$. If $f\in{\text{inv }} (C_{b}(\mathbb{A}_{\rho}))$, then $f^{r}\in{\text{inv }}
(C(\mathbb{T}))$, and so $f^{r}$ has a winding number $w(f_r)$. 
We set \label{def_w} $w(f):=w(f^{r})\in\mathbb{Z}$ with respect to $0$, and it can be shown that this is well-defined. 
Now we define the map
$\iota:{\text{inv }} (\underrightarrow{\lim}\;C_{b}(\mathbb{A}_{r}))\rightarrow
\mathbb{Z}$. For  $[(f_{r})]\in{\text{inv }} (\underrightarrow{\lim}\ C_{b}(\mathbb{A}_{r}
))$, 
\begin{equation}
\iota(f):=\lim_{r\rightarrow1}w(f_{r}),\text{ for }
f=[(f_{r})]\in{\text{inv }} (\underrightarrow{\lim}\;C_{b}(\mathbb{A}
_{r})).
\end{equation}
It can be shown that $\iota$ is well-defined and all the properties we demand are satisfied; see \cite{FreSas}. 
It was also shown there that 
$\underrightarrow{\lim}\;C_{b}(\mathbb{A}_{r}) $ is isometrically isomorphic to  
$ C(\beta\mA_0 \setminus \mA_0)$ (where \label{def_stone}
$$
\beta \mA_0:=\textrm{Stone-\v{C}ech compactification of }\mA_0,
$$
which is the maximal ideal space of the Banach algebra $C_b(\mA_0)$ 
of all complex-valued bounded continuous functions on $\mA_0$), and moreover 
$\underrightarrow{\lim}\;C_{b}(\mathbb{A}_{r}) $ is a sub-$C^\ast$-algebra of $L^\infty(\mT)$.
From here we see that the $\|\cdot\|_{S,\infty}$-norm in the definition of the 
$d_{c,r}$-metric is the usual $\|\cdot\|_\infty$-norm of functions in $L^\infty(\mR)$. 

\section{An example}

As an illustration, we consider \cite[Example~4.3]{PS}, namely 
$$
\bbp_1=\frac{1}{s-e^{-s}}.
$$
It was shown in \cite{PS} that an algebraic spectral factorization leading 
to normalized coprime factors is not possible in this example. 
So calculating the $\nu$-metric between this plant and a perturbed one, say 
$$
\bbp_a:=\frac{1}{s-ae^{-s}},
$$
is problematic. Nevertheless, there exists a coprime factorization of
$$
\bbp_a=\displaystyle\frac{\bbn_a}{\bbd_a}
$$
over $H^\infty$ of the half-plane 
$\mC_{\scriptscriptstyle >0}$, where 
$$
\bbn_a:=\frac{1}{1+s},\quad \bbd_a:=\frac{s-ae^{-s}}{1+s}.
$$
That this is a coprime factorization over $H^\infty$ follows from the fact that 
\begin{eqnarray*}
 \bbx&:=&1+e^{-s}, \\
 \bby&:=& 1
\end{eqnarray*}
solve the Bezout equation $\bbn \bbx + \bbd \bby=1$:
$$
\bbn \bbx + \bbd \bby=
\frac{1}{1+s} \cdot (1+e^{-s}) + \frac{s-e^{-s}}{1+s}\cdot 1=1.
$$
We will see that using this coprime factorization, we can compute
$d_{c,r}(\bbp_a,\bbp_1)$ fairly easily. We remark also that $\bbp_a$ is
the transfer function associated with the retarded delay differential
equation
$$
\left.\begin{array}{rcl}
x'(t)&=&a\cdot x(t-1)+u(t),\\
y(t)&=& x(t)\end{array}\right\}, \quad t\geq 0.
$$
For $s\in \mC_{\scriptscriptstyle > 0}$, 
and with $a=:1+\delta$, we have 
$$
\overline{\bbn_1(s)}\cdot \bbd_a(s)+\overline{\bbd_1(s)}\cdot \bbn_a(s)
=
\underbrace{\frac{1}{|1+s|^2} + \frac{|s-e^{-s}|^2}{|1+s|^2}}_{=:f_1(s)} -\delta \cdot 
\underbrace{\frac{e^{-s}(\overline{s}-e^{-\overline{s}}) }{|1+s|^2}}_{=:f_2(s)}.
$$
It is easy to see that since 
 $
\displaystyle \lim_{\substack{s\in \mC_{\scriptscriptstyle >0}\\ s\rightarrow \infty}} f_1(s)=1,
$ 
and  for all $s\in \mC_{\scriptscriptstyle >0}$,
 $
f_1(s)\geq \displaystyle \frac{1}{|1+s|^2}>0,
$ we have
$$
m:=\inf_{s\in \mC_{\scriptscriptstyle >0}} f_1(s)>0.
$$
Also, 
 $\displaystyle 
\lim_{\substack{s\in \mC_{\scriptscriptstyle >0}\\ s\rightarrow \infty}} f_2(s)=0,
$ 
and so 
 $$\displaystyle 
M:= \sup_{s\in \mC_{\scriptscriptstyle >0}} | f_2(s)|<+\infty.
 $$
Consequently for all $\delta=a-1$ small enough, for example, $|\delta|=|a-1|<\displaystyle \frac{m}{2(M+1)}$, we have
$$
\textrm{Re}\Big(\overline{\bbn_1(s)}\cdot \bbd_a(s)+\overline{\bbd_1(s)}\cdot \bbn_a(s)\Big)
>m-|\epsilon|M>m-\frac{m}{2(M+1)} \cdot M>m-\frac{m}{2}=\frac{m}{2}>0.
$$
So the condition that $\bbn_1^*\bbd_a+\bbd_1^*\bbn_a\in \inv S$ and $\iota(\bbn_1^*\bbd_a+\bbd_1^*\bbn_a)=\circ$ will be 
satisfied for all such values of the parameter $a$. Hence 
\begin{eqnarray*}
d_{c,r}(\bbp_1,\bbp_a)&=&\sup_{y\in \mR} \frac{|\bbn_1(iy) \bbd_a(iy)-\bbd_1(iy) \bbn_a(iy)|}{
\sqrt{|\bbn_1(iy)|^2+|\bbd_1(iy)|^2}\sqrt{|\bbn_a(iy)|^2+|\bbd_a(iy)|^2}}
\\
&=& 
\sup_{y\in \mR} \frac{\displaystyle \frac{|a-1|}{1+y^2} }{
\sqrt{\displaystyle \frac{1+y^2+1+2y\sin y }{1+y^2} }\sqrt{ \displaystyle \frac{1+y^2+a^2+2ay\sin y }{1+y^2}} }
\\
&=& \frac{|a-1|}{\sqrt{2(1+a^2)}},
\end{eqnarray*}
for all $a\in \mR $ satisfying 
$$
|a-1|<\displaystyle \frac{m}{2(M+1)},
$$
where $m,M$ are given by 
\begin{eqnarray*}
m&:=& \inf_{s\in \mC_{\scriptscriptstyle >0}} \left(\frac{1}{|1+s|^2} + \frac{|s-e^{-s}|^2}{|1+s|^2}\right),\\
M&:=&\sup_{s\in \mC_{\scriptscriptstyle >0}} \frac{|e^{-s}(\overline{s}-e^{-\overline{s}}) |}{|1+s|^2}.
\end{eqnarray*}
It is clear that $M\geq 0$ and 
$$
m\leq 2=\left(\frac{1}{|1+s|^2} + \frac{|s-e^{-s}|^2}{|1+s|^2}\right)\bigg|_{s=0} ,
$$
so that 
$$
d_{c,r}(\bbp_1,\bbp_a)=\frac{|a-1|}{\sqrt{2(1+a^2)}}
$$
for all $a\in \mR$ satisfying $|a-1|<1$.

We remark that a stabilizing controller for the nominal plant $\bbp_1$ is trivially found to be 
$$
\bbc=-\frac{\bbx}{\bby},
$$
where $\bbx:=1+e^{-s}$ and $\bby:= 1$ (which 
solve the Bezout equation $\bbn \bbx + \bbd \bby=1$). 
The stability margin of the pair $(\bbp_1,\bbc)$ is given by 
\begin{eqnarray*}
\mu_{\bbp_1,\bbc}^{-1}&=&\|H(\bbp_1,\bbc)\|_\infty=
\left\| \left[ \begin{array}{cc}\displaystyle \frac{1+e^{-s}}{1+s} & \displaystyle \frac{1}{1+s}\\
                                       \displaystyle \frac{(1+e^{-s})(s-e^{-s})}{1+s}& \displaystyle \frac{s-e^{-s}}{1+s}
\end{array}\right]\right\|_\infty
\\
&=&
\left\| \left[ \begin{array}{cc}\displaystyle  \frac{1}{1+s} \\ \displaystyle \frac{s-e^{-s}}{1+s}\end{array}\right]\left[ \begin{array}{cc}
                                       1+e^{-s} & 1
\end{array}\right]\right\|_\infty.
\end{eqnarray*}
It follows that 
$$
\mu_{\bbp_1,\bbc}^{-1}=
\sup_{s\in i\mR} \sqrt{\bigg(\Big|\frac{1}{1+s} \Big|^2+\Big|\frac{s-e^{-s}}{1+s}\Big|^2\bigg)\Big( |1+e^{-s}|^2+1^2\Big)}.
$$
A crude upper bound for $\mu_{\bbp_1,\bbc}^{-1}$ can be shown to be $5$ (a Maple plot reveals the value being around 3.224):
\begin{eqnarray*}
\mu_{\bbp_1,\bbc}^{-1}&=&\sup_{y\in \mR} \sqrt{\frac{2+2y\sin y+y^2}{1+y^2}(3+2\cos y)}
\leq \sup_{y\in \mR} \sqrt{\frac{2+2y^2+y^2}{1+y^2}(3+2\cdot 1)}\\
&\leq& \sup_{y\in \mR} \sqrt{2\cdot \frac{1}{1+y^2} + 3\cdot \frac{y^2}{1+y^2}}\sqrt{5}
\leq \sqrt{2\cdot 1 + 3\cdot 1}\sqrt{5}=5.
\end{eqnarray*}
Thus  $\mu_{\bbp_1,\bbc}\geq 1/5$. From Theorem~\ref{thm_main_2}, it follows that for all 
$a\in \mR$ such that $|a-1|<1$, and with 
$$
d_{c,r} (\bbp, \bbp_1)=\frac{|a-1|}{\sqrt{2(1+a^2)}}<\frac{1}{5},
$$
$\bbp$ is stabilized by $\bbc$, guaranteeing a stability margin of 
$$
\frac{1}{5}-\frac{|a-1|}{\sqrt{2(1+a^2)}}>0.
$$
We remark that the inequality
$$
\frac{|a-1|}{\sqrt{2(1+a^2)}}<\frac{1}{5}
$$ 
is satisfied by all $a\in \mR$ in the interval $(2/3, 3/2)$, 
and then also $|a-1|<1$. So all plants 
$$
\bbp= \frac{1}{s-ae^{-s}}
$$ 
with $a\in (2/3, 3/2)$ are stabilized by the controller $\bbc=-(1+e^{-s})$ which stabilizes the nominal plant 
$$
\bbp_1=\frac{1}{s-e^{-s}}.
$$

\medskip 

\noindent {\bf Acknowledgements:} The author thanks the two referees for their comments which greatly improved the 
presentation of the article.

\end{document}